\numberwithin{equation}{section}
\numberwithin{figure}{section}
  \theoremstyle{plain}
  \newtheorem*{thm*}{\protect\theoremname}
\theoremstyle{plain}
\newtheorem{thm}{\protect\theoremname}
  \theoremstyle{plain}
  \newtheorem{prop}[thm]{\protect\propositionname}
  \theoremstyle{definition}
  \newtheorem{defn}[thm]{\protect\definitionname}
  \theoremstyle{plain}
  \newtheorem{lem}[thm]{\protect\lemmaname}
  \theoremstyle{definition}
  \newtheorem{example}[thm]{\protect\examplename}
  \theoremstyle{remark}
  \newtheorem{rem}[thm]{\protect\remarkname}
\newtheorem{parn}{}[section]
  \providecommand{\definitionname}{Definition}
  \providecommand{\examplename}{Example}
  \providecommand{\lemmaname}{Lemma}
  \providecommand{\propositionname}{Proposition}
  \providecommand{\remarkname}{Remark}
  \providecommand{\theoremname}{Theorem}
\providecommand{\theoremname}{Theorem}
\begin{document}

\title[Completions of $\mathbb{A}^3$ into Mori fiber spaces]{Explicit biregular/birational geometry of affine threefolds:  completions of $\mathbb{A}^3$ into del Pezzo fibrations and Mori conic bundles}

\author{Adrien Dubouloz}

\address{IMB UMR5584, CNRS, Univ. Bourgogne Franche-Comté, F-21000 Dijon,
France.}

\email{adrien.dubouloz@u-bourgogne.fr}

\author{Takashi Kishimoto}

\address{Department of Mathematics, Faculty of Science, Saitama University,
Saitama 338-8570, Japan.}

\email{tkishimo@rimath.saitama-u.ac.jp}

\thanks{This project was partially supported by ANR Grant \textquotedbl{}BirPol\textquotedbl{}
ANR-11-JS01-004-01, Grant-in-Aid for Scientific Research of JSPS no.
24740003. The work was done during a stay of the first author at Saitama
University and a stay of the second author at the Institute de Math\'ematiques
de Bourgogne (Dijon). The authors thank these institutions for their
hospitality and financial support. }

\subjclass[2000]{14E30; 14R10; 14R25; }

\keywords{del Pezzo fibrations, Mori conic bundle, affine three-space, twisted
$\mathbb{A}_{*}^{1}$-fibrations. }
\begin{abstract}
We study certain pencils $\overline{f}:\mathbb{P}\dashrightarrow\mathbb{P}^{1}$
of del Pezzo surfaces generated by a smooth del Pezzo surface $S$
of degree less or equal to $3$ anti-canonically embedded into a weighted
projective space $\mathbb{P}$ and an appropriate multiple of a hyperplane
$H$. Our main observation is that every minimal model program relative
to the morphism $\tilde{f}:\tilde{\mathbb{P}}\rightarrow\mathbb{P}^{1}$
lifting $\overline{f}$ on a suitable resolution $\sigma:\tilde{\mathbb{P}}\rightarrow\mathbb{P}$
of its indeterminacies preserves the open subset $\sigma^{-1}(\mathbb{P}\setminus H)\simeq\mathbb{A}^{3}$.
As an application, we obtain projective completions of $\mathbb{A}^{3}$
into del Pezzo fibrations over $\mathbb{P}^{1}$ of every degree less
or equal to $4$. We also obtain completions of $\mathbb{A}^{3}$
into Mori conic bundles, whose restrictions to $\mathbb{A}^{3}$ are
twisted $\mathbb{A}_{*}^{1}$-fibrations over $\mathbb{A}^{2}$. 
\end{abstract}
\maketitle

\section*{Introduction}

A threefold Mori fiber space is a mildly singular projective threefold
$X$ equipped with an extremal contraction $\tau:X\rightarrow B$
over a lower dimensional normal projective variety $B$. More precisely,
$X$ has $\mathbb{Q}$-factorial terminal singularities, $\tau$ has
connected fibers, the anti-canonical divisor $-K_{X}$ of $X$ is
ample on the fibers and the relative Picard number $\rho(X/B)=\mathrm{rk}(N_{1}(X))-\mathrm{rk}(N_{1}(B))$
is equal to $1$. These fiber spaces are the possible outputs of Minimal
Model Programs (MMP) ran from rational, or more generally uniruled,
smooth projective threefolds and provide the natural higher dimensional
analogues in this framework of the projective plane and the minimally
ruled surfaces. Noting that rational minimally ruled surfaces $\mathbb{F}_{n}$,
$n\geq2$, $\mathbb{P}^{1}\times\mathbb{P}^{1}$ and $\mathbb{P}^{2}$
are smooth projective completions of the affine plane $\mathbb{A}^{2}$,
it is natural to ask which threefold Mori fiber spaces $\tau:X\rightarrow B$
are projective completions of $\mathbb{A}^{3}$ and, as a first step
towards a potential geometric description of the structure of the
automorphism group $\mathrm{Aut}(\mathbb{A}^{3})$ of $\mathbb{A}^{3}$
from the point of view of the Sarkisov Program \cite{Cor95}, try
to classify them up to birational isomorphisms preserving the inner
open subset $\mathbb{A}^{3}$. 

In the case $\dim B=0$, Fano threefolds of Picard number $1$ containing
$\mathbb{A}^{3}$ have received a lot of attention during the past
decades: a complete classification is known in the smooth case (see
e.g. \cite{Fu93} and the references therein) but the general picture
in the singular case remains elusive. Much less seems to be known
about completions of $\mathbb{A}^{3}$ into ``strict'' Mori fiber
spaces $\tau:X\rightarrow B$, where $\dim B=1,2$. There are two
cases: del Pezzo fibrations when $\dim B=1$ and Mori conic bundles
when $\dim B=2$. Elementary examples of such completions are locally
trivial projective bundles $\tau:\mathbb{P}(\mathcal{O}_{\mathbb{P}^{1}}\oplus\mathcal{O}_{\mathbb{P}^{1}}(m)\oplus\mathcal{O}_{\mathbb{P}^{1}}(n))\rightarrow\mathbb{P}^{1}$
and $\tau:\mathbb{P}(\mathcal{O}_{\mathbb{P}^{2}}\oplus\mathcal{O}_{\mathbb{P}^{2}}(m))\rightarrow\mathbb{P}^{2}$
over $\mathbb{P}^{1}$ and $\mathbb{P}^{2}$, which come respectively
as projective models of linear projections from $\mathbb{A}^{3}$
to $\mathbb{A}^{1}$ and $\mathbb{A}^{2}$. But in general, there
is no reason that the restriction to $\mathbb{A}^{3}$ of the structure
morphism $\tau:X\rightarrow B$ of a completion into a strict Mori
fiber space has general fibers isomorphic to affine spaces. For instance,
since a smooth del Pezzo surface of degree $d\leq3$ with Picard number
$1$ is not rational \cite{Man74}, there cannot exist any completion
of $\mathbb{A}^{3}$ into a del Pezzo fibration $\tau:X\rightarrow B=\mathbb{P}^{1}$
of degree $d\leq3$ whose restriction to $\mathbb{A}^{3}$ is a fibration
with generic fiber isomorphic to the affine plane $\mathbb{A}^{2}$
over the function field of $B$. 

The main purpose of this article is to give examples of ``twisted''
completions of $\mathbb{A}^{3}$ into strict Mori fiber spaces, that
is completions $\tau:X\rightarrow B$ for which the general fibers
of the restriction of $\tau$ to $\mathbb{A}^{3}$ are not isomorphic
to affine spaces. One strategy to construct such examples is to start
from a regular function $f:\mathbb{A}^{3}\rightarrow\mathbb{A}^{1}$
with smooth rational general fibers which extends to a morphism $\tilde{f}':X'\rightarrow\mathbb{P}^{1}$
with smooth general fibers on a smooth projective threefold $X'$
and to a run a relative MMP $\varphi:X'\dashrightarrow X$ over $\mathbb{P}^{1}$.
The rationality of the fibers guarantees that the output $\tilde{f}:X\rightarrow\mathbb{P}^{1}$
is either a del Pezzo fibration or factors through a Mori conic bundle
$\xi:X\rightarrow W$ over a normal projective surface $W$. The main
obstacle is that there is no reason in general that a relative MMP
$\varphi:X'\dashrightarrow X$ preserves the open subset $\mathbb{A}^{3}\subset X'$:
such a process $\varphi$ might contract divisors which are not supported
on the boundary $X'\setminus\mathbb{A}^{3}$, inducing a nontrivial
birational morphism between $\mathbb{A}^{3}$ and its image by $\varphi$
which, in this case is in general again affine, and even worse, small
contractions might occur outside the boundary with the effect that
the image of $\mathbb{A}^{3}$ by $\varphi$ is no longer affine.
As a general fact, understanding the biregular geometry of an affine
threefold via the birational geometry of its projective models requires
to get some effective control on the birational maps appearing in
MMP processes between these models. One solution in our situation
is to consider functions $f:\mathbb{A}^{3}\rightarrow\mathbb{A}^{1}$
extending to fibrations $\tilde{f}':X'\rightarrow\mathbb{P}^{1}$
whose general fibers are already smooth del Pezzo surfaces. Here we
can expect to gain more control on the possible horizontal divisors
contracted by $\varphi$ as well as on its flipping and flipped curves,
and that the output will be in general a del Pezzo fibration, possibly
of higher degree. 

The functions we consider in this article are obtained as restrictions
of pencils $\mathcal{L}$ generated by a smooth del Pezzo surface
$S$ of degree $1$, $2$ or $3$ anti-canonically embedded into a
weighted projective $3$-space $\mathbb{P}$ and an appropriate multiple
$eH$ of a hyperplane $H\in|\mathcal{O}_{\mathbb{P}}(1)|$. Namely,
$\mathbb{P}\setminus H$ is isomorphic to $\mathbb{A}^{3}$ and $f:\mathbb{A}^{3}\rightarrow\mathbb{A}^{1}$
is the restriction of the rational map $\overline{f}:\mathbb{P}\dashrightarrow\mathbb{P}^{1}$
defined by $\mathcal{L}$. For an appropriate class of resolutions
$\sigma:\tilde{\mathbb{P}}\rightarrow\mathbb{P}$ of $\overline{f}:\mathbb{P}\dashrightarrow\mathbb{P}^{1}$
restricting to an isomorphism over $\mathbb{P}\setminus H$ and for
which $\sigma^{-1}(H)$ induces an anti-canonical divisor on the generic
fiber of the induced morphism $\tilde{f}:\tilde{\mathbb{P}}\rightarrow\mathbb{P}^{1}$,
which we call \emph{good resolutions}, we establish that every MMP
$\varphi:\tilde{\mathbb{P}}\dashrightarrow\tilde{\mathbb{P}}'$ relative
to $\tilde{f}$ restricts to an isomorphism between $\tilde{\mathbb{P}}\setminus\sigma^{-1}(H)\simeq\mathbb{A}^{3}$
and its image. The output $\tilde{\mathbb{P}}'$ is then a compactification
of $\mathbb{A}^{3}$ either into a del Pezzo fibration $\tilde{f}':\tilde{\mathbb{P}}'\rightarrow\mathbb{P}^{1}$
or into a Mori conic bundle $\xi:\tilde{\mathbb{P}}'\rightarrow W$
over a certain normal projective surface $q:W\rightarrow\mathbb{P}^{1}$,
and we characterize each possible type of output in terms of the structure
of the base locus of $\mathcal{L}$. Our main result can be summarized
as follows:
\begin{thm*}
Let \textup{$\mathcal{L}\subset|\mathcal{O}_{\mathbb{P}}(e)|$} be
the pencil generated by an anti-canonically embedded smooth del Pezzo
surface $S\subset\mathbb{P}$ of degree $d\in\{1,2,3\}$ and a multiple
of hyperplane $H\in|\mathcal{O}_{\mathbb{P}}(1)|$, let $\sigma:\tilde{\mathbb{P}}\rightarrow\mathbb{P}$
be a good resolution of the corresponding rational map $\overline{f}:\mathbb{P}\dashrightarrow\mathbb{P}^{1}$,
and let $\varphi:\tilde{\mathbb{P}}\dashrightarrow\tilde{\mathbb{P}}'$
be a MMP relative to the induced morphism $\tilde{f}=\overline{f}\circ\sigma:\tilde{\mathbb{P}}\rightarrow\mathbb{P}^{1}$.
Then the induced morphism $\tilde{f}':\tilde{\mathbb{P}}'\rightarrow\mathbb{P}^{1}$
is a projective completion of $\mathbb{A}^{3}$ with $\mathbb{Q}$-factorial
terminal singularities of one of the following types:

a) If $H\cap S$ is irreducible, then $\tilde{f}':\tilde{\mathbb{P}}'\rightarrow\mathbb{P}^{1}$
is a del Pezzo fibration of degree $d$. 

b) If $d=2$ and $H\cap S$ is reducible, then $\tilde{f}':\tilde{\mathbb{P}}'\rightarrow\mathbb{P}^{1}$
is del Pezzo fibration of degree $d+1=3$. 

c) If $H\cap S$ has three irreducible components, then $\tilde{\mathbb{P}}'$
is a Mori conic bundle. 
\end{thm*}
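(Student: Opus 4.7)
The plan is to split the theorem into two assertions: (A) every step of the relative MMP $\varphi$ over $\mathbb{P}^{1}$ restricts to an isomorphism on $\tilde{\mathbb{P}}\setminus\sigma^{-1}(H)\simeq\mathbb{A}^{3}$, and (B) the Mori-fiber-space type of $\tilde{\mathbb{P}}'$ is determined by the combinatorics of $H\cap S$. The unifying observation is the defining property of a good resolution: $\sigma^{-1}(H)$ cuts out an anti-canonical divisor on the generic fiber $F_{\eta}$ of $\tilde{f}$. Consequently $-K_{\tilde{\mathbb{P}}/\mathbb{P}^{1}}$ is linearly equivalent to $\sigma^{-1}(H)$ plus an effective divisor supported in closed fibers of $\tilde{f}$, so the boundary $\sigma^{-1}(H)$ is intrinsically tied to the relative anti-canonical class.

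For (A), I would argue by induction on the MMP steps. At each stage, a $K$-negative extremal ray $R$ relative to $\mathbb{P}^{1}$ is generated by a curve $C$ lying in some fiber of the current morphism to $\mathbb{P}^{1}$. If $C$ meets a general fiber, then since $\sigma^{-1}(H)|_{F_{\eta}}$ is effective and anti-canonical, the inequality $K\cdot C<0$ forces $\sigma^{-1}(H)\cdot C>0$, so by extremality $C$ lies in the support of (the strict transform of) $\sigma^{-1}(H)$. If $C$ lies in a reducible closed fiber of $\tilde{f}$, an explicit local analysis of the exceptional set of $\sigma$ near the base points of $\mathcal{L}$ shows that every $K$-negative curve is itself a component of the boundary. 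In both cases the contraction or flip is centered on $\sigma^{-1}(H)$, hence restricts to an isomorphism on $\mathbb{A}^{3}$; one then verifies that the boundary of the new model is still anti-canonical on the generic fiber, and iterates.

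For (B), the generic fiber $F_{\eta}'$ of $\tilde{f}'$ over $k(\mathbb{P}^{1})$ is obtained from $F_{\eta}$ by successively contracting those $(-1)$-curves visible on $F_{\eta}$ that arise from the components of $H\cap S$ via the good resolution. In case (a), when $H\cap S$ is irreducible, there is nothing to contract and $\tilde{f}'$ is a del Pezzo fibration of degree $d$. In case (b), one of the two components of $H\cap S$ yields a $(-1)$-curve whose contraction raises the degree to $d+1=3$. In case (c), the three components form a triangle of $(-1)$-curves whose contraction drops the Picard number of the generic fiber to $2$ and produces a conic-bundle structure $F_{\eta}'\to\mathbb{P}_{k(\mathbb{P}^{1})}^{1}$, which globalizes to a Mori conic bundle $\xi:\tilde{\mathbb{P}}'\to W$ over a surface $W\to\mathbb{P}^{1}$. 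The main obstacle is the case analysis in (A) at reducible fibers of $\tilde{f}$: one must control the full exceptional set of $\sigma$ and rule out extremal rays supported in divisors disjoint from $\sigma^{-1}(H)$ as well as flips with flipping or flipped curves escaping the boundary. The hypothesis of a \emph{good} resolution, constraining the discrepancies along each exceptional divisor, is precisely what makes this analysis tractable.
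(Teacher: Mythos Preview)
Your overall architecture---split into (A) preservation of $\mathbb{A}^3$ and (B) determination of the output type via the generic fiber---matches the paper, and the starting observation that $\sigma^{-1}(H)|_{F_\eta}$ is anti-canonical is correct and important. But both halves contain genuine gaps.

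In (A), the step ``$\sigma^{-1}(H)\cdot C>0$, so by extremality $C$ lies in the support of $\sigma^{-1}(H)$'' is not valid: positive intersection means $C$ \emph{meets} the boundary, not that it is contained in it, and extremality of the ray does not change this. The paper's argument is quite different and does not use this intersection inequality. For flips, it uses that flipping curves must pass through singular points of the threefold, and a good resolution is smooth outside $\tilde{f}^{-1}(\infty)$; hence flips occur only inside that fiber. For divisorial contractions, the crucial preliminary fact (Lemma~\ref{lem:irreducible-members}) is that every member of $\mathcal{L}$ except $eH$ is irreducible and reduced, so all fibers of $\tilde{f}$ other than $\tilde{f}^{-1}(\infty)$ are irreducible; thus any vertical contracted divisor is already inside $\tilde{f}^{-1}(\infty)\subset\sigma^{-1}(H)$. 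For a horizontal contracted divisor $E$, the paper argues that $E\cap S_c$ is a union of $(-1)$-curves on a general fiber $S_c$, but if $E$ were not $\sigma$-exceptional then $\sigma_*E$ would be an ample hypersurface of $\mathbb{P}$ cutting $\sigma(S_c)$ in such curves---impossible. Your ``explicit local analysis'' over reducible fibers is never needed, and the discrepancy remark in your last sentence plays no role.

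In (B), your description of case (c) is incorrect: one cannot contract the whole triangle of lines, since after contracting one $(-1)$-curve the remaining two become $0$-curves and are no longer contractible. The paper shows that \emph{exactly one} horizontal component is contracted, so $F_\eta'$ is a smooth del Pezzo of degree $4$ with $\rho\geq 2$, which forces the conic bundle. Likewise in (b) you assert that one component is contracted without justifying why \emph{at least} one must be: the paper rules out the ``none contracted'' possibility by a direct contradiction in $\mathrm{NS}(F_\eta')$ assuming a conic-bundle structure. More structurally, you are missing the Picard-number bookkeeping (Lemma~\ref{lem:Mcb-characterization}): if the output is a Mori conic bundle then $r=h_\varphi+2$, where $r$ is the number of components of $H\cap S$ and $h_\varphi\in\{0,1\}$ the number of horizontal boundary components contracted. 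This immediately excludes a conic-bundle output in cases (a) and (b), and your argument has no substitute for it.
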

In the case where the output $\tilde{\mathbb{P}}'$ is a Mori conic
bundle $\xi:\tilde{\mathbb{P}}'\rightarrow W$, we establish further
that the restriction of $\xi$ to the inner $\mathbb{A}^{3}$ is a
\emph{twisted} $\mathbb{A}_{*}^{1}$-\emph{fibration} $\xi_{0}:\mathbb{A}^{3}\rightarrow\mathbb{A}^{2}$,
that is, a flat fibration whose generic fiber is a nontrivial form
of the punctured affine line $\mathbb{A}_{*}^{1}$ over the function
field of $\mathbb{A}^{2}$. This contrasts with the situation for
$\mathbb{A}^{2}$ for which no such type of $\mathbb{A}_{*}^{1}$-fibration
can exist, essentially as a consequence of Tsen's theorem and the
factoriality of $\mathbb{A}^{2}$ (see \cite[Lemma 1.7.2]{MiyBook}).
We also provide a geometric interpretation of these fibrations in
terms of the pair $(S,H)$ initially chosen for the construction.

\section{Pencils of del Pezzo surfaces in weighted projective spaces}

Recall that a smooth del Pezzo surface is a smooth projective surface
$S$ whose anti-canonical divisor $-K_{S}$ is ample. The integer
$d=(-K_{S}^{2})\in\{1,\ldots,9\}$ is called the degree of $S$. Every
such surface is either isomorphic to $\mathbb{P}^{1}\times\mathbb{P}^{1}$
or to the blow-up of the projective plane $\mathbb{P}^{2}$ in $9-d$
points in general position \cite{Man74}. Anti-canonical models $\mathrm{Proj}_{\mathbb{C}}(\bigoplus_{m\geq0}H^{0}(S,-mK_{S}))$
of smooth del Pezzo surfaces of degree $\leq3$ are naturally embedded
as hypersurfaces in certain weighted projective spaces. Their properties
are summarized by the following proposition: 
\begin{prop}
\label{prop:Embedded-dP-summary}\indent 

1) Every smooth del Pezzo surface of degree $3$ is isomorphic to
a smooth cubic surface in $\mathbb{P}^{3}$, and conversely every
smooth cubic surface $S$ in $\mathbb{P}^{3}$ is a del Pezzo surface
of degree $3$. For every hyperplane $H\in|\mathcal{O}_{\mathbb{P}^{3}}(1)|$,
$H\mid_{S}$ is a reduced anti-canonical divisor on $S$ whose support
is isomorphic to a plane cubic curve, either irreducible, or consisting
of the union of a smooth conic $C$ and a line $\ell$ intersecting
each other twice, either transversally in two distinct points or tangentially
in a unique point, or consisting of three lines, either in general
position or intersecting each other in a unique point, which is then
an Eckardt point of $S$. 

2) Every smooth del Pezzo surface of degree $2$ is isomorphic to
a smooth quartic hypersurface of the weighted projective space $\mathbb{P}(1,1,1,2)$.
Conversely, every smooth quartic $S$ in $\mathbb{P}(1,1,1,2)$ is
a del Pezzo surface of degree $2$. For every $H\in|\mathcal{O}_{\mathbb{P}(1,1,1,2)}(1)|$,
$H\mid_{S}$ is a reduced anti-canonical divisor on $S$ whose support
is isomorphic either to an irreducible plane cubic curve, or to the
union of two $(-1)$-curves on $S$ intersecting each other twice,
either transversally in two distinct points or tangentially in a unique
point.

3) Every smooth del Pezzo surface of degree $1$ is isomorphic to
a smooth sextic hypersurface of the weighed projective space $\mathbb{P}(1,1,2,3)$,
and conversely every smooth sextic in $\mathbb{P}(1,1,2,3)$ is a
del Pezzo surface of degree $1$. For every $H\in|\mathcal{O}_{\mathbb{P}(1,1,2,3)}(1)|$,
$H\mid_{S}$ is an irreducible and reduced anti-canonical divisor
on $S$ whose support is isomorphic to a plane cubic curve. 
\end{prop}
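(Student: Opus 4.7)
My plan is to handle each degree $d \in \{1,2,3\}$ in parallel, first settling the embedding into the appropriate weighted projective space and then classifying the hyperplane sections from the resulting equations.

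For parts 1)-3), I would begin by computing the anti-canonical ring $R(S)=\bigoplus_{m\geq 0}H^{0}(S,-mK_{S})$. Riemann--Roch on $S$ together with Kodaira vanishing gives $\dim H^{0}(S,-mK_{S}) = 1 + \tfrac{m(m+1)}{2}d$ for $m \geq 1$, and from the standard description of $S$ as the blow-up of $\mathbb{P}^{2}$ in $9-d$ points in general position I would read off explicit generators and relations: four generators in degree $1$ with one cubic relation for $d=3$; three in degree $1$ plus one in degree $2$ with a quartic relation for $d=2$; and two in degree $1$, one in degree $2$ and one in degree $3$, with a sextic relation, for $d=1$. This gives each claimed embedding, with the embedding line bundle identified with $-K_{S}$. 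For the converse direction, weighted adjunction on $\mathbb{P}$ yields $\omega_{S}\simeq \mathcal{O}_{S}(\deg S - \sum w_{i})$; in all three cases this equals $\mathcal{O}_{S}(-1)$, from which the ampleness of $-K_{S}$ and the degree formula follow.

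Turning to hyperplane sections $C = H|_{S}$, the identification $\mathcal{O}_{\mathbb{P}}(1)|_{S} \simeq -K_{S}$ just established shows that $C \in |-K_{S}|$, so $C$ has arithmetic genus $1$ by adjunction and self-intersection $d$. Reducedness follows from Bertini in the base-point-free cases $d=2,3$ and from a direct inspection of $|\mathcal{O}_{\mathbb{P}(1,1,2,3)}(1)|$ for $d=1$. To enumerate the decompositions $C=\sum D_{i}$, I would use that every irreducible component $D$ satisfies $D\cdot(-K_{S})\geq 1$, with equality precisely when $D$ is a $(-1)$-curve. This forces irreducibility in degree $1$; in degree $2$, the only nontrivial case is $C=\ell_{1}+\ell_{2}$ with two $(-1)$-curves meeting in $\ell_{1}\cdot\ell_{2}=\ell_{1}\cdot(-K_{S}-\ell_{1})=2$ points counted with multiplicity; and in degree $3$, the possibilities are $C$ irreducible, $C=\ell+C_{2}$ with $\ell\cdot C_{2}=2$, or $C=\ell_{1}+\ell_{2}+\ell_{3}$ with each $\ell_{i}\cdot\ell_{j}=1$.

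The main subtlety I expect is to distinguish, in the three-line configuration for $d=3$, the generic case of three distinct pairwise intersections from the concurrent case. Here I would invoke the classical fact that three coplanar lines on a smooth cubic surface are concurrent if and only if they meet at an Eckardt point, equivalently that the tangent plane to $S$ at that point cuts out the three lines. Combined with the corresponding elementary analysis of intersection multiplicities (transverse versus tangential) in the two-component reducible configurations, this produces the geometric descriptions listed in parts 1)-3).
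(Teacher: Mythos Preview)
The paper does not supply a proof of this proposition: it is presented as a summary of classical facts about anti-canonical models of del Pezzo surfaces of low degree, and the text moves directly to the definition of the pencils $\mathcal{L}$. So there is nothing to compare against, and your outline is essentially the standard argument one would give.

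One step does need to be fixed. You justify reducedness of $H|_S$ for $d=2,3$ by invoking Bertini, but Bertini only controls the \emph{general} member of a base-point-free linear system, whereas the proposition asserts reducedness for \emph{every} $H\in|\mathcal{O}_{\mathbb{P}}(1)|$. The correct argument is already implicit in the intersection-theoretic bookkeeping you set up afterwards: writing $C=\sum a_iD_i\sim -K_S$ with $a_i\geq 1$ and $n_i=D_i\cdot(-K_S)\geq 1$, the identity $\sum a_in_i=d$ together with $C^2=d$ rules out any $a_i\geq 2$. For instance, when $d=3$ a putative $2\ell+\ell'$ with $\ell,\ell'$ lines forces $\ell\cdot\ell'=2$, impossible for distinct lines in $\mathbb{P}^3$; when $d=2$, $C=2\ell$ gives $C^2=-4\neq 2$. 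A related imprecision: your claim that $D\cdot(-K_S)=1$ holds ``precisely when $D$ is a $(-1)$-curve'' fails literally for $d=1$, where the unique irreducible member $C\sim -K_S$ has $C\cdot(-K_S)=1$ but $C^2=1$. What is true, and what your enumeration actually uses, is that any \emph{proper} irreducible component $D$ of $C$ with $D\cdot(-K_S)=1$ is a $(-1)$-curve; this follows from connectedness of $C$ and $D\cdot(C-D)\geq 1$. With these two adjustments your sketch goes through.
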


\subsection{Pencils of del Pezzo surfaces of degree $\leq3$ }

In what follows, given an anti-canonically embedded smooth del Pezzo
surface $S$ of degree $d\leq3$ as in Proposition \ref{prop:Embedded-dP-summary}
above, we use the same notation $\mathbb{P}=\mathrm{Proj}(\mathbb{C}[x,y,z,w])$
to denote the ambient spaces $\mathbb{P}^{3}$, $\mathbb{P}(1,1,1,2)$
and $\mathbb{P}(1,1,2,3)$ according to $d=3$, $2$ and $1$, the
variables $x$, $y$, $z$ and $w$ having degrees $(1,1,1,1)$, $(1,1,1,2)$
and $(1,1,2,3)$ respectively. The degree of $S$ as a hypersurface
of $\mathbb{P}$ is denoted by $e$. It is equal to $3$, $4$ or
$6$ according to $d=3$, $2$ and $1$ respectively. 
\begin{defn}
\label{def:Pencils}Let $S\subset\mathbb{P}$ be a smooth del Pezzo
surface of degree $d\in\{1,2,3\}$ and let $H\in|\mathcal{O}_{\mathbb{P}}(1)|$
be a hyperplane. We denote by $\mathcal{L}\subset|\mathcal{O}_{\mathbb{P}}(e)|$
the pencil generated by $S$ and $eH$ and by $\overline{f}:\mathbb{P}\dashrightarrow\mathbb{P}^{1}=\mathbb{P}(\mathcal{L}^{*})$
the corresponding rational map. 
\end{defn}

\subsubsection{\label{sub:Coordinate-presentation}\noindent }

A member $S_{[\alpha:\beta]}$, $[\alpha:\beta]\in\mathbb{P}^{1}$,
of $\mathcal{L}$ is defined up to a linear transformation of $\mathbb{P}$
by the vanishing of a weighted-homogeneous polynomial $F\in\mathbb{C}[x,y,z,w]$
of degree $e$ of the form 
\[
F=\beta s(x,y,z,w)-\alpha x^{e},
\]
 where $S$ and $H$ are defined respectively by the vanishing of
$s(x,y,z,w)$ and $x$. The scheme-theoretic base locus of $\mathcal{L}$
is equal to the closed subscheme of $\mathbb{P}$ defined by the weighted-homogeneous
ideal $(s(x,y,z,w),x^{e})$ of $\mathbb{C}[x,y,z,w]$. Its support
is equal to $H\cap S$. With this description, the rational map $\overline{f}:\mathbb{P}\dashrightarrow\mathbb{P}^{1}$
coincides with that defined by $[x:y:z:w]\mapsto[s(x,y,z,w):x^{e}]$.
The complement of $H$ is isomorphic to $\mathbb{A}^{3}$ with inhomogeneous
coordinates $Y=x^{-1}y$, $Z=x^{-a}z$ and $W=x^{-b}w$, where $(a,b)=(1,1)$,
$(1,2)$ and $(2,3)$ according to $d=3$, $2$ and $1$ respectively,
and letting $\infty=[1:0]=\overline{f}_{*}(H)\in\mathbb{P}^{1}$,
the restriction of $\overline{f}$ to $\mathbb{P}\setminus H$ coincides
with the regular function 
\[
f:\mathbb{A}^{3}=\mathbb{P}\setminus H\rightarrow\mathbb{A}^{1}=\mathbb{P}^{1}\setminus\{\infty\}\simeq\mathrm{Spec}(\mathbb{C}[\lambda]),\;\left(X,Y,Z\right)\mapsto s(1,Y,Z,W).
\]

The \emph{generic member} $S_{\eta}$ of $\mathcal{L}$, that is,
the closure in $\mathbb{P}_{\mathbb{C}(\lambda)}=\mathrm{Proj}(\mathbb{C}(\lambda)[x,y,z,w])$
of the fiber of $f$ over the generic point $\eta$ of $\mathbb{P}^{1}$,
is isomorphic to the projective surface over $\mathbb{C}(\lambda)$
defined by the vanishing of weighted-homogeneous polynomial $s(x,y,z,w)+\lambda x^{e}\in\mathbb{C}(\lambda)[x,y,z,w]$.
Since $S$ is smooth, it follows from the Jacobian criterion that
$S_{\eta}$ is smooth, hence is a smooth del Pezzo surface of degree
$d$ defined over the function field $\mathbb{C}(\lambda)$ of $\mathbb{P}^{1}$.
This implies in particular that the general member of $\mathcal{L}$
is a smooth del Pezzo surface of degree $d$. Some members of $\mathcal{L}$
can be singular (see Example \ref{ex:singular-examples} below) but
all members of $\mathcal{L}$ except $eH$ are integral schemes:
\begin{lem}
\label{lem:irreducible-members}All members of $\mathcal{L}$ except
$eH$ are irreducible and reduced. \end{lem}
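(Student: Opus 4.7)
The only potentially special member is $eH$ itself (corresponding to $[\alpha:\beta]=[1:0]$), so the task reduces to showing that every polynomial of the form $F = s - \alpha_0 x^e$ with $\alpha_0\in\mathbb{C}$ is irreducible and squarefree in the weighted graded ring $\mathbb{C}[x,y,z,w]$. I plan to argue by contradiction, exploiting the fact that the fibers of $\overline{f}$ over the affine part of $\mathbb{P}^1$ all restrict to the same scheme on $H$, namely the reduced anti-canonical curve $S\cap H$ furnished by Proposition \ref{prop:Embedded-dP-summary}.

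Suppose $F = p\cdot q$ is a non-trivial factorization with $p$ and $q$ weighted-homogeneous of positive degree, allowing $p=q$ in order to cover the non-reduced case. First I would observe that $x$ cannot divide either factor: if it did, pulling out a factor of $x$ from $F$ would show $x\mid s$, forcing $H$ to be a component of $S$, contrary to $S$ being an irreducible smooth hypersurface distinct from $H$. Next, reducing $F=pq$ modulo $x$ yields $s(0,y,z,w) = p(0)\,q(0)$, a non-trivial factorization of the defining equation of $S\cap H$ in the coordinate ring of $H$. By Proposition \ref{prop:Embedded-dP-summary}, $S\cap H$ is a reduced divisor in all three cases $d\in\{1,2,3\}$, so $p(0)$ and $q(0)$ must be coprime.

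The heart of the argument is then to extract a common zero of $p$, $q$, and $x$ and use it to contradict the smoothness of $S$. The subscheme $V(p)\cap V(q) \subset \mathbb{P}$ has dimension at least one, and any one-dimensional component must meet $H=V(x)$ since $x$ defines a nonzero section of a positive degree sheaf on any projective curve. At a closed point $P$ of this intersection, $x(P)=0$ and $p(P)=q(P)=0$, hence $\nabla F(P) = q(P)\nabla p(P) + p(P)\nabla q(P) = 0$. On the other hand, since $\partial_x F = \partial_x s - \alpha_0 e\,x^{e-1}$ and $e\geq 2$, evaluating at $P$ gives $\nabla F(P)=\nabla s(P)$. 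Combined with $s(P)=F(P)=0$, this means $S$ is singular at $P$, contradicting the smoothness hypothesis.

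The main subtlety I anticipate is the weighted-projective aspect: the Jacobian criterion characterizes singular points of $S$ by the common vanishing of $s$ and its partials only at points that lie in the smooth locus of $\mathbb{P}$. For $d=1,2$ the ambient space has isolated cyclic quotient singularities, and if the point $P$ produced above were to lie at such a singularity one would have to verify the contradiction using explicit local equations for $S$ in a chart around these finitely many loci. Since these loci are zero-dimensional and $S$ is assumed smooth, this should be a routine case check rather than a genuine obstacle.
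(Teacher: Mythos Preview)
Your argument is correct and takes a genuinely different route from the paper's. The paper argues case by case on $d$: for each degree it observes that a non-integral member $S'$ must contain a low-degree irreducible component $P$, and then uses the restriction isomorphisms $H^{0}(\mathbb{P},\mathcal{O}_{\mathbb{P}}(j))\simeq H^{0}(S,\mathcal{O}_{S}(-jK_{S}))$ together with the explicit description of $H\cap S$ from Proposition~\ref{prop:Embedded-dP-summary} to derive a numerical contradiction (e.g.\ $P|_{S}$ would be supported on a $(-1)$-curve while having non-negative self-intersection). Your approach is a single uniform Jacobian computation valid for all three degrees simultaneously, which is arguably cleaner. Two remarks. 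First, your Step~2 (coprimality of $p(0)$ and $q(0)$) is never invoked afterwards and can simply be dropped; the dimension count in Step~3 needs nothing from it. Second, the subtlety you flag about the singular locus of $\mathbb{P}$ does not actually arise: since $S$ is smooth it cannot pass through any point of $\mathrm{Sing}(\mathbb{P})$ (it would inherit a nontrivial cyclic quotient singularity there), so the coefficients of $w^{2}$ in $s$, and of $z^{3}$ when $d=1$, are nonzero. As the singular points of $\mathbb{P}$ all satisfy $x=0$, the polynomial $F=s-\alpha_{0}x^{e}$ takes the same nonzero values there, hence $V(F)$ avoids $\mathrm{Sing}(\mathbb{P})$ and your point $P$ automatically lies in the smooth locus, where the Jacobian criterion applies without further ado.
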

\begin{proof}
We consider each degree $d=3,2,1$ separately. If $d=3$ and $S'\in\mathcal{L}\setminus\{S,3H\}$
is either reducible or non reduced, then one of its irreducible components
is necessarily a hyperplane, say $H'$, which is different from $H$
as $\mathcal{L}$ does not have any fixed component. So $H'\cap S$
is distinct from $H\cap S$, hence is strictly contained in it as
$H\cap S$ coincides with the support of the base locus of $\mathcal{L}$.
This is absurd in view of 1) in Proposition \ref{prop:Embedded-dP-summary}. 

In the case $d=2$, a member $S'\in\mathcal{L}\setminus\{S,4H\}$
which is either reducible or non reduced contains an irreducible component
of degree one or two. In the first case, we would have again a hyperplane
$H'\in|\mathcal{O}_{\mathbb{P}(1,1,1,2)}(1)|$ distinct from $H$
for which $H'\cap S$ is contained in $H\cap S$, which is absurd
by virtue of 2) in Proposition \ref{prop:Embedded-dP-summary}. In
the second case, $S'$ would be the union of two irreducible quadric
hypersurfaces $Q_{1}$ and $Q_{2}$ of $\mathbb{P}(1,1,1,2)$, necessarily
distinct from each other since otherwise every member of $\mathcal{L}$
would be reducible. Since the restriction map 
\[
H^{0}(\mathbb{P}(1,1,1,2),\mathcal{O}_{\mathbb{P}(1,1,1,2)}(2))\stackrel{\sim}{\rightarrow}H^{0}(S,\mathcal{O}_{\mathbb{P}(1,1,1,2)}(2)\mid_{S})\simeq H^{0}(S,\mathcal{O}_{S}(-2K_{S}))
\]
is an isomorphism, both intersections $Q_{i}\cap H$, $i=1,2$ are
strictly contained in $H\cap S$. Indeed, if $Q_{i}\cap H=H\cap S$
then $Q_{i}\mid_{S}=2H\mid_{S}$ and then $Q_{i}=2H$ contradicting
the irreducibility of $Q_{i}$. This implies in turn by virtue of
2) in Proposition \ref{prop:Embedded-dP-summary} that $Q_{i}\mid_{S}$
is supported on a $(-1)$-curve, which is absurd as $Q_{i}\mid_{S}$
has non negative self-intersection. 

Finally, if $d=1$ and $S'\in\mathcal{L}\setminus\{S,6H\}$ is not
integral, then it contains an irreducible component $P$ of degree
$1$, $2$ or $3$. Because of the isomorphisms 
\[
H^{0}(\mathbb{P}(1,1,2,3),\mathcal{O}_{\mathbb{P}(1,1,2,3)}(j))\stackrel{\sim}{\rightarrow}H^{0}(S,\mathcal{O}_{\mathbb{P}(1,1,2,3)}(j)\mid_{S})\simeq H^{0}(S,\mathcal{O}_{S}(-jK_{S})),\quad j=1,2,3,
\]
the same argument as in the previous case implies that $P\cap S$
is strictly contained in $H\cap S$, which is absurd since the latter
is irreducible by virtue of 3) in Proposition \ref{prop:Embedded-dP-summary}. \end{proof}
\begin{example}
\label{ex:singular-examples}

a) The sextics $S_{1}$ and $S_{2}$ in $\mathbb{P}(1,1,2,3)=\mathrm{Proj}_{\mathbb{C}}(\mathbb{C}[x,y,z,w])$
defined respectively by the equations $z^{3}+w^{2}+xy^{5}=0$ and
$z^{3}+w^{2}+x^{2}(x^{3}y+z^{2})=0$ are normal del Pezzo surfaces
with a unique singular point of type $E_{8}$ at $p_{1}=[1:0:0:0]$
and $p_{2}=[0:1:0:0]$ respectively. The general members of the pencils
$\overline{f}_{i}:\mathbb{P}(1,1,2,3)\dashrightarrow\mathbb{P}^{1}$,
$i=1,2$, generated respectively by $S_{1}$ and $6H_{1}$ where $H_{1}=\{x+by=0\}\in|\mathcal{O}_{\mathbb{P}(1,1,2,3)}(1)|$,
$b\in\mathbb{C}$, and $S_{2}$ and $6H_{2}$ where $H_{2}=\{ax+y=0\}\in|\mathcal{O}_{\mathbb{P}(1,1,2,3)}(1)|$,
$a\in\mathbb{C}$, are smooth del Pezzo surfaces of degree $1$. The
intersection $S_{1}\cap H_{1}$ is either a rational cuspidal cubic
if $b=0$ or a smooth elliptic curve otherwise, while $S_{2}\cap H_{2}$
is either a nodal cubic if $a=0$ or a smooth elliptic curve otherwise. 

b) The quartic surface $S=\{w^{2}+yz^{3}+xy^{3}=0\}$ in $\mathbb{P}(1,1,1,2)=\mathrm{Proj}_{\mathbb{C}}(\mathbb{C}[x,y,z,w])$
is a normal del Pezzo surface with a unique singular point of type
$E_{7}$ at $[1:0:0:0]$. The general members of the pencil $\overline{f}:\mathbb{P}(1,1,1,2)\dashrightarrow\mathbb{P}^{1}$
generated by $S$ and $4H$ where $H=\{x+ay+bz=0\}\in|\mathcal{O}_{\mathbb{P}(1,1,1,2)}(1)|$,
$a,b\in\mathbb{C}$ are smooth del Pezzo surfaces of degree $2$.
The intersection of $H$ with $S$ is either a cuspidal cubic if $b=0$
or a smooth elliptic curve otherwise. 

c) The cubic surfaces $S_{1}(\lambda)=\{x^{3}+w(\lambda x^{2}+y^{2}+wz)=0\}$,
$\lambda\in\mathbb{C}$, and $S_{2}=\{xyz+y^{3}+w^{2}z=0\}$ in $\mathbb{P}^{3}$
are normal del Pezzo surfaces respectively with a unique singularity
of type $E_{6}$ at $[0:0:1:0]$ and a pair of singular points $[1:0:0:0]$
and $[0:0:1:0]$ of types $A_{1}$ and $A_{5}$. The general members
of the pencils $\overline{f}_{i}:\mathbb{P}^{3}\dashrightarrow\mathbb{P}^{1}$,
$i=1,2$, generated respectively by $S_{1}(\lambda)$ and $3H_{1}$,
where $H_{1}=\{z=0\}$, and by $S_{2}$ and $3H_{2}$, where $H_{2}=\{x+z=0\}$,
are smooth cubic surfaces. 
\end{example}

\section{Good resolutions and relative MMPs}

In this section, we introduce particular resolutions $\sigma:\tilde{\mathbb{P}}\rightarrow\mathbb{P}$
of the indeterminacies of the rational map $\overline{f}:\mathbb{P}\dashrightarrow\mathbb{P}^{1}$
associated to a pencil as in Definition \ref{def:Pencils} above.
These have the property to restrict to isomorphisms over the open
subset $\mathbb{A}^{3}=\mathbb{P}\setminus H$, and we show that every
MMP process $\varphi:\tilde{\mathbb{P}}\dashrightarrow\tilde{\mathbb{P}}'$
relative to the induced morphism $\tilde{f}=\overline{f}\circ\sigma:\tilde{\mathbb{P}}\rightarrow\mathbb{P}^{1}$
again preserves $\tilde{\mathbb{P}}\setminus\sigma^{-1}(H)\simeq\mathbb{P}\setminus H$,
inducing an isomorphism between $\tilde{\mathbb{P}}\setminus\sigma^{-1}(H)\simeq\mathbb{P}\setminus H$
and $\tilde{\mathbb{P}}'\setminus\varphi_{*}(\sigma^{-1}(H))$.

\subsection{Good resolutions of del Pezzo pencils }

Let $S\subset\mathbb{P}$ be a smooth del Pezzo surface of degree
$d\leq3$, let $\mathcal{L}\subset|\mathcal{O}_{\mathbb{P}}(e)|$
be the pencil generated by $S$ and $eH$ for some $H\in|\mathcal{O}_{\mathbb{P}}(1)|$
and let $\overline{f}:\mathbb{P}\dashrightarrow\mathbb{P}^{1}$ be
the corresponding rational map as in Definition \ref{def:Pencils}.
Similarly as in $\S$ \ref{sub:Coordinate-presentation}, we let $\infty=\overline{f}_{*}(H)\in\mathbb{P}^{1}$. 
\begin{defn}
\label{def:good-resolution} A \emph{good resolution} of $\overline{f}$
is a triple $(\tilde{\mathbb{P}},\sigma,\tilde{f})$ consisting of
a projective threefold $\tilde{\mathbb{P}}$, a birational morphism
$\sigma:\tilde{\mathbb{P}}\rightarrow\mathbb{P}$ and a morphism $\tilde{f}:\tilde{\mathbb{P}}\rightarrow\mathbb{P}^{1}$
satisfying the following properties:

a) The diagram \[\xymatrix{ \tilde{\mathbb{P}} \ar[r]^{\sigma} \ar[d]_{\tilde{f}} & \mathbb{P} \ar@{-->}[d]^{\overline{f}} \\ \mathbb{P}^1 \ar@{=}[r] & \mathbb{P}^1 }\]
commutes. 

b) $\tilde{\mathbb{P}}$ has at most $\mathbb{Q}$-factorial terminal
singularities and is smooth outside $\tilde{f}^{-1}(\infty)$. 

c) $\sigma:\tilde{\mathbb{P}}\rightarrow\mathbb{P}$ is a sequence
of blow-ups whose successive centers lie above the base locus of $\mathcal{L}$,
inducing an isomorphism $\tilde{\mathbb{P}}\setminus\sigma^{-1}(H)\stackrel{\sim}{\rightarrow}\mathbb{P}\setminus H$,
and whose restriction to every closed fiber of $\tilde{f}$ except
$\tilde{f}^{-1}(\infty)$ is an isomorphism onto its image. 
\end{defn}

\subsubsection{\noindent }

\label{par:good-resolutuon-properties} It follows from the definition
that all irreducible divisors in the exceptional locus $\mathrm{Exc}(\sigma)$
of a good resolution $\sigma$ that are vertical for $\tilde{f}$
are contained in $\tilde{f}^{-1}(\infty)$. Furthermore, since the
restriction of $\sigma$ to the generic fiber of $\tilde{f}$ is an
isomorphism onto the generic member of $\mathcal{L}$, $\mathrm{Exc}(\sigma)$
contains exactly as many irreducible horizontal divisors as there
are irreducible components in $H\cap S$. Indeed, there is a one to
one correspondence between irreducible horizontal divisors in $\mathrm{Exc}(\sigma)$
and irreducible components of the intersection of $\sigma^{-1}(H)$
with the generic fiber of $\tilde{f}$. By assumption, the latter
is isomorphic to the smooth del Pezzo surface $S_{\eta}$ of degree
$d$ in $\mathbb{P}_{\mathbb{C}(\lambda)}$ with equation $s(x,y,z,w)-\lambda x^{e}=0$
(see $\S$ \ref{sub:Coordinate-presentation}), and the definition
of $(\tilde{\mathbb{P}},\sigma,\tilde{f})$ implies that it intersects
$\sigma^{-1}(H)$ along the curve $D_{\eta}\simeq(H\cap S)\times_{\mathrm{Spec}(\mathbb{C})}\mathrm{Spec}(\mathbb{C}(\lambda))$
with equation $s(0,y,z,w)=0$ in $\mathrm{Proj}(\mathbb{C}(\lambda)[y,z,w])$.
In particular, $D_{\eta}$ is an anti-canonical divisor on $S_{\eta}$
with the same number of irreducible components as $H\cap S$, all
them being defined over $\mathbb{C}(\lambda)$. Note also that the
intersection of $\sigma^{-1}(H)$ with a closed fiber $\tilde{f}^{-1}(c)$
distinct from $\tilde{f}^{-1}(\infty)$ is isomorphic to the intersection
of $H$ with the corresponding member $\sigma(\tilde{f}^{-1}(c))$
of $\mathcal{L}$.\\

A good resolution $(\tilde{\mathbb{P}},\sigma,\tilde{f})$ of $f:\mathbb{P}\dashrightarrow\mathbb{P}^{1}$
always exists. For instance, let $\tau:X\rightarrow\mathbb{P}$ be
the blow-up of scheme-theoretic base locus of $\mathcal{L}$. Then
$X$ is isomorphic to the hypersurface in $\mathbb{P}\times\mathrm{Proj}(\mathbb{C}[\alpha,\beta])$
defined by the weighted bi-homogeneous equation $\beta s(x,y,z,w)-\alpha\beta x^{e}=0$,
and we have a commutative diagram \[\xymatrix{X   \ar[dr]_{\pi=\mathrm{pr}_2\mid_X}  \ar[r]^{\tau}  & \mathbb{P} \ar@{-->}[d]^{\overline{f}} \\   & \mathbb{P}^1 }\]
The morphism $\tau$ restricts on each fiber of $\pi$ to an isomorphism
onto the corresponding member of $\mathcal{L}$ and $X\setminus\tau^{-1}(H)\simeq\mathbb{P}\setminus H$.
Furthermore, since $S$ is smooth, it follows from the Jacobian criterion
that $X$ is smooth outside $\pi^{-1}(\infty)$. Letting $\tau_{1}:\tilde{\mathbb{P}}\rightarrow X$
be any resolution of the singularities of $X$, the triple $(\tilde{\mathbb{P}},\tau\circ\tau_{1},\pi\circ\tau_{1})$
is a good resolution of $\overline{f}$ for which $\tilde{\mathbb{P}}$
is even smooth.

\subsection{\label{sub:Relative-MMP}Basic properties of relative MMPs ran from
good resolutions }

Let $(\tilde{\mathbb{P}},\sigma,\tilde{f})$ be a good resolution
of the rational map $\overline{f}:\mathbb{P}\dashrightarrow\mathbb{P}^{1}$
associated to a pencil $\mathcal{L}\subset|\mathcal{O}_{\mathbb{P}}(e)|$
as above. Recall \cite[3.31]{KM98} that a MMP $\varphi:\tilde{\mathbb{P}}_{0}=\tilde{\mathbb{P}}\dashrightarrow\tilde{\mathbb{P}}'=\tilde{\mathbb{P}}_{n}$
relative to $\tilde{f}_{0}=\tilde{f}:\tilde{\mathbb{P}}_{0}\rightarrow\mathbb{P}^{1}$
consists of a finite sequence $\varphi=\varphi_{n}\circ\cdots\circ\varphi_{1}$
of birational maps 
\begin{eqnarray*}
\tilde{\mathbb{P}}_{k-1} & \stackrel{\varphi_{k}}{\dashrightarrow} & \tilde{\mathbb{P}}_{k}\\
\tilde{f}_{k-1}\downarrow &  & \downarrow\tilde{f}_{k}\qquad k=1,\ldots,n,\\
\mathbb{P}^{1} & = & \mathbb{P}^{1}
\end{eqnarray*}
where each $\varphi_{k}$ is associated to an extremal ray $R_{k-1}$
of the closure $\overline{NE}(\tilde{\mathbb{P}}_{k-1}/\mathbb{P}_{1})$
of the relative cone of curves of $\tilde{\mathbb{P}}_{k-1}$ over
$\mathbb{P}^{1}$. Each of these birational maps $\varphi_{k}$ is
either of divisorial contraction or a flip whose flipping and flipped
curves are contained in the fibers of $\tilde{f}_{k-1}$ and $\tilde{f}_{k}$
respectively. Letting $\Delta_{0}=\sigma^{-1}(H)$ and $\Delta_{k}=({\varphi_{k}})_{*}(\Delta_{k-1})$
for every $k=1,\ldots,n$, the next result asserts in particular that
every relative MMP ran from a good resolution of $\overline{f}:\mathbb{P}\dashrightarrow\mathbb{P}^{1}$
preserves the open subset $\sigma^{-1}(\mathbb{P}\setminus H)\simeq\mathbb{P}\setminus H\simeq\mathbb{A}^{3}$. 
\begin{prop}
\label{prop:MMP-preserving} \label{lem:MMP-intermediate-steps} Let
$\mathcal{L}\subset|\mathcal{O}_{\mathbb{P}}(e)|$ be as above and
let $(\tilde{\mathbb{P}},\sigma,\tilde{f})$ be any good resolution
of the corresponding rational map $\overline{f}:\mathbb{P}\dashrightarrow\mathbb{P}^{1}$.
Then every MMP $\varphi:\tilde{\mathbb{P}}\dashrightarrow\tilde{\mathbb{P}}'$
relative to $\tilde{f}:\tilde{\mathbb{P}}\rightarrow\mathbb{P}^{1}$
restricts to an isomorphism $\mathbb{A}^{3}\simeq\tilde{\mathbb{P}}\setminus\sigma^{-1}(H)\stackrel{\sim}{\rightarrow}\tilde{\mathbb{P}}'\setminus\varphi_{*}(\sigma^{-1}(H))$.
More precisely, the following hold at each intermediate step: 

a) The threefold $\tilde{\mathbb{P}}_{k}$ is smooth outside $\tilde{f}_{k}^{-1}(\infty)$,

b) The birational map $\varphi_{k}:\tilde{\mathbb{P}}_{k-1}\dashrightarrow\tilde{\mathbb{P}}_{k}$
restricts to an isomorphism $\tilde{\mathbb{P}}_{k-1}\setminus\Delta_{k-1}\rightarrow\tilde{\mathbb{P}}_{k}\setminus\Delta_{k}$,

c) The restriction of $\varphi_{k}$ to a general closed fiber of
$\tilde{f}_{k}$ is either an isomorphism onto its image, or the contraction
of finitely many disjoint $(-1)$-curves. \end{prop}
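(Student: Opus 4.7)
The plan is to establish (a), (b), and (c) simultaneously by induction on the step index $k$, the main isomorphism assertion then following by composing (b) over $k=1,\ldots,n$. For the base case $k=0$, property (a) is exactly item (b) of Definition~\ref{def:good-resolution}, while (b) and (c) are vacuous. So fix $k\geq 1$ and assume (a)--(c) hold through step $k-1$. Two consequences will be used repeatedly: first, $\tilde{\mathbb{P}}_{k-1}$ is smooth outside $\tilde{f}_{k-1}^{-1}(\infty)\subset\Delta_{k-1}$; second, for every $c\neq\infty$ the general closed fiber $F_{c,k-1}$ is a smooth del Pezzo surface, obtained from the (irreducible, by Lemma~\ref{lem:irreducible-members}) member of $\mathcal{L}$ over $c$ by contracting disjoint $(-1)$-curves, and $\Delta_{k-1}\cap F_{c,k-1}$ is an anti-canonical divisor on it. The same assertions hold on the generic fiber $S_{\eta,k-1}$ over $\mathbb{C}(\lambda)$.

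I would then analyze the extremal ray $R_{k-1}$ associated to $\varphi_k$ in three cases; its contracted or flipping curves all lie in a single fiber $\tilde{f}_{k-1}^{-1}(c)$. If $\varphi_k$ is a flip, Mori's classification of extremal contractions on smooth projective threefolds rules out small $K$-negative extremal rays on the smooth locus, so the flipping curves must meet the singular locus and hence lie in $F_{\infty,k-1}\subset\Delta_{k-1}$; this gives (b), and (c) is automatic since $\varphi_k$ is an isomorphism on every general $F_{c,k-1}$. If $\varphi_k$ is a divisorial contraction with exceptional divisor $E$ vertical in some $F_{c,k-1}$, irreducibility of $F_{c,k-1}$ forces $E=F_{c,k-1}$; contracting an entire fiber to a lower-dimensional subvariety is impossible by upper-semi-continuity of fiber dimension for $\tilde{f}_k$, so $c=\infty$ and $E\subset F_{\infty,k-1}$. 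If $E$ is horizontal, then for general $c\neq\infty$ the restriction $\varphi_k|_{F_{c,k-1}}$ is a birational morphism of smooth surfaces onto its image, and the smoothness of both source and target forces $E\cap F_{c,k-1}$ to be a disjoint union of $(-1)$-curves contracted simultaneously, yielding (c); the decisive step for (b) is then to show $E\subset\Delta_{k-1}$, equivalently that the generic-fiber trace $E\cap S_{\eta,k-1}$ is a union of irreducible components of the anti-canonical divisor $\Delta_{k-1}|_{S_{\eta,k-1}}$.

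Granting these analyses, property (a) at step $k$ follows because $\varphi_k$ is an isomorphism outside $\Delta_{k-1}$, so smoothness is inherited from $\tilde{\mathbb{P}}_{k-1}$ away from $\varphi_k(\Delta_{k-1})$; in the vertical cases $\varphi_k(\Delta_{k-1})$ is still contained in $F_{\infty,k}$, while in the horizontal divisorial case the local picture over a small neighbourhood of a general $c\in\mathbb{P}^1$ is a fiberwise simultaneous $(-1)$-blow-down in a smooth family of del Pezzo surfaces, which produces a smooth threefold. The new $\mathbb{Q}$-factorial terminal singularities created by $\varphi_k$ are thus confined to $F_{\infty,k}$.

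The main obstacle is the claim that every $\mathbb{C}(\lambda)$-rational $(-1)$-curve on $S_{\eta,k-1}$ is a component of $-K_{S_{\eta,k-1}}=\Delta_{k-1}|_{S_{\eta,k-1}}$. Unwinding through the birational maps $\varphi_1,\ldots,\varphi_{k-1}$ (each of which, on the generic fiber, contracts $\mathbb{C}(\lambda)$-rational $(-1)$-curves supported on the horizontal part of $\Delta$), the statement reduces to its analogue on $S_\eta$: the $\mathbb{C}(\lambda)$-rational $(-1)$-curves of $S_\eta$ are precisely the $(-1)$-components of $D_\eta=H\cap S$. The geometric input here is that the only curves on $S$ that deform across the whole pencil $\mathcal{L}$ are those supported on its base locus $H\cap S$; combined with the case-by-case description of the components of $H\cap S$ in Proposition~\ref{prop:Embedded-dP-summary}, this should deliver the required monodromy identification.
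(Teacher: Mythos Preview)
Your inductive skeleton matches the paper's exactly: same case split into flip, vertical divisorial, and horizontal divisorial, with the flip case handled via the fact that flipping curves must meet the singular locus, and the vertical case handled via irreducibility of fibers over $c\neq\infty$ (Lemma~\ref{lem:irreducible-members}). The smoothness assertion (a) in the horizontal case is likewise the same in spirit; the paper makes it precise by invoking Cutkosky's classification \cite{Cut88}, which gives directly that away from $\tilde f_k^{-1}(\infty)$ the contraction is the blow-up of a smooth curve in a smooth threefold.

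The genuine gap is your treatment of the key claim $E\subset\Delta_{k-1}$ in the horizontal divisorial case. You reduce it to the assertion that every $\mathbb{C}(\lambda)$-rational $(-1)$-curve on $S_\eta$ is a component of $D_\eta$, and then gesture toward a monodromy argument (``the only curves on $S$ that deform across the whole pencil are those supported on its base locus'') that you do not carry out. The paper bypasses this entirely with a one-line argument on general \emph{closed} fibers: if $E$ were horizontal but not $\sigma$-exceptional, then $\sigma_*E$ would be a nonzero effective divisor on $\mathbb{P}$, hence ample since $\rho(\mathbb{P})=1$; its restriction to a general member $\sigma(S_{c,0})$ of $\mathcal{L}$ would then be ample as well, but this restriction equals $\sigma(E\cap S_{c,0})$, a disjoint union of $(-1)$-curves, which is absurd. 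Note that this same ampleness argument in fact \emph{proves} your monodromy claim (a $\mathbb{C}(\lambda)$-rational $(-1)$-curve not in $D_\eta$ would sweep out exactly such a non-exceptional horizontal divisor), so the missing idea in your sketch is precisely the use of $\rho(\mathbb{P})=1$.
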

\begin{proof}
Since by virtue of Lemma \ref{lem:irreducible-members}, all members
of $\mathcal{L}$ except $eH$ are irreducible and reduced, the fact
that $(\tilde{\mathbb{P}},\sigma,\tilde{f})$ is a good resolution
guarantees that all fibers of $\tilde{f}_{0}$ except maybe $\tilde{f}_{0}^{-1}(\infty)$
are irreducible and reduced. This implies in turn that the divisors
contracted by $\varphi:\tilde{\mathbb{P}}_{0}\dashrightarrow\tilde{\mathbb{P}}_{n}$
are either irreducible components of $\tilde{f}_{0}^{-1}(\infty)$
or are horizontal for $\tilde{f}_{0}$. Let $\varphi_{0}=\mathrm{id}_{\tilde{\mathbb{P}}_{0}}$.
If $\varphi_{k}$, $k\geq1,$ is the contraction of a divisor $E_{k-1}\subset\tilde{\mathbb{P}}_{k-1}$
onto a curve $B_{k}\subset\tilde{\mathbb{P}}_{k}$, then by the previous
observation, $E$ is either an irreducible component of $\tilde{f}_{k-1}^{-1}(\infty)$
or is horizontal for $\tilde{f}_{k-1}$. In the second case, $E_{k-1}$
is the proper transform in $\tilde{\mathbb{P}}_{k-1}$ of an irreducible
divisor $E\subset\tilde{\mathbb{P}}_{0}$, which is necessarily contained
in the support of $\Delta_{0}$. Indeed, by induction hypothesis,
the restriction $\varphi_{k-1}\circ\cdots\varphi_{1}\circ\varphi_{0}:S_{c,0}=\tilde{f}_{0}^{-1}(c)\rightarrow S_{c,k-1}=\tilde{f}_{k}^{-1}(c)$
to a general closed fiber of $\tilde{f}_{0}$ is either an isomorphism
or a sequence of contractions of $(-1)$-curves. Since $E_{k-1}\cap S_{c,k-1}$
consists of a disjoint union of $(-1)$-curves, it follows that $E\cap S_{c,0}$
is a curve $C$ on $S_{c,0}$ that can be contracted to a finite number
of smooth points, hence consists of a disjoint union of $(-1)$-curves
because $S_{c,0}$ is a smooth del Pezzo surface. But on the other
hand, if $E$ were not $\sigma$-exceptional, the hypothesis that
$\sigma$ maps $S_{c,0}$ isomorphically onto its image in $\mathbb{P}$
would imply that the proper transform $\sigma_{*}E$ of $E$ in $\mathbb{P}$
is an ample divisor intersecting $\sigma(S_{c,0})$ along the curve
$\sigma(C)$ which is absurd as $\sigma(C)$ consists again of a disjoint
union of $(-1)$-curves. Thus $E$ is contained in $\Delta_{0}$ and
hence $E_{k-1}$ is contained in $\Delta_{k-1}$. Furthermore, since
$\tilde{\mathbb{P}}_{k-1}\setminus\tilde{f}_{k-1}^{-1}(\infty)$ is
smooth by hypothesis, it follows that $\tilde{\mathbb{P}}_{k}\setminus\tilde{f}_{k}^{-1}(\infty)$
is still smooth along $B_{k}\setminus(B_{k}\cap\tilde{f}_{k}^{-1}(\infty))$.
More precisely, $B_{k}\setminus(B_{k}\cap\tilde{f}_{k}^{-1}(\infty))$
is smooth and 
\[
\varphi_{k}\mid_{\tilde{\mathbb{P}}_{k-1}\setminus\tilde{f}_{k-1}^{-1}(\infty)}:\tilde{\mathbb{P}}_{k-1}\setminus\tilde{f}_{k-1}^{-1}(\infty)\rightarrow\tilde{\mathbb{P}}_{k}\setminus\tilde{f}_{k}^{-1}(\infty)
\]
coincides with the blow-up of $\tilde{\mathbb{P}}_{k}\setminus\tilde{f}_{k}^{-1}(\infty)$
along $B_{k}\setminus(B_{k}\cap\tilde{f}_{k}^{-1}(\infty))$ \cite{Cut88}.
Finally, the restriction of $\varphi_{k}$ to a general closer fiber
of $\tilde{f}_{k-1}$ is either an isomorphism onto its image, or
the contraction of finitely many disjoint $(-1)$-curves, in particular
its image by $\varphi_{k}$ is again a smooth del Pezzo surface. Otherwise,
if $\varphi_{k}$ is a flip, then since its flipping curves must pass
through a singular point of $\tilde{\mathbb{P}}_{k-1}$ \cite[14.6.4]{CKM88},
they are contained in $\tilde{f}_{k-1}^{-1}(\infty)$. The flipped
curves of $\varphi_{k}$ are thus contained in $\tilde{f}_{k}^{-1}(\infty)$
and $\varphi_{k}$ restricts to an isomorphism between $\tilde{\mathbb{P}}_{k-1}\setminus\tilde{f}_{k-1}^{-1}(\infty)$
and $\tilde{\mathbb{P}}_{k}\setminus\tilde{f}_{k}^{-1}(\infty)$,
which is thus again smooth. 
\end{proof}

\section{outputs of relative MMPs}

Since a general member of a pencil $\mathcal{L}\subset|\mathcal{O}_{\mathbb{P}}(e)|$
as in Definition \ref{def:Pencils} above is a rational surface, the
output $\tilde{\mathbb{P}}'$ of a relative MMP $\varphi:\tilde{\mathbb{P}}\dashrightarrow\tilde{\mathbb{P}}'$
ran from a good resolution $(\tilde{\mathbb{P}},\sigma,\tilde{f})$
of the corresponding rational map $\overline{f}:\mathbb{P}\dashrightarrow\mathbb{P}^{1}$
is a Mori fiber space $\tilde{f}':\tilde{\mathbb{P}}'\rightarrow\mathbb{P}^{1}$.
More precisely, $\tilde{f}':\tilde{\mathbb{P}}'\rightarrow\mathbb{P}^{1}$
is either a del Pezzo fibration with relative Picard number $1$,
or a Mori conic bundle over a certain normal projective surface $W$,
say $\tilde{f}'=q\circ\xi:\tilde{\mathbb{P}}'\rightarrow W\rightarrow\mathbb{P}^{1}$
where $\xi:\tilde{\mathbb{P}}'\rightarrow W$ is a flat morphism of
relative Picard number $1$, with connected fibers and such that $-K_{\tilde{\mathbb{P}}'}$
is relatively ample. In each case, it follows from Proposition \ref{prop:MMP-preserving}
that $\tilde{\mathbb{P}}'$ is a projective completion of $\mathbb{A}^{3}$
with at most $\mathbb{Q}$-factorial terminal singularities. The following
theorem shows in particular that except maybe in the case where $d=3$
and $H\cap S$ consists of two irreducible components, the nature
of $\tilde{\mathbb{P}}'$ depends only on the base locus of $\mathcal{L}$.
In particular, it depends neither on the chosen good resolution $(\tilde{\mathbb{P}},\sigma,\tilde{f})$
nor on the relative MMP $\varphi:\tilde{\mathbb{P}}\dashrightarrow\tilde{\mathbb{P}}'$. 
\begin{thm}
\label{thm:MMP-outputs} Let \textup{$\mathcal{L}\subset|\mathcal{O}_{\mathbb{P}}(e)|$}
be the pencil generated by a smooth del Pezzo surface $S\subset\mathbb{P}$
of degree $d\in\{1,2,3\}$ and $H\in|\mathcal{O}_{\mathbb{P}}(1)|$,
let $(\tilde{\mathbb{P}},\sigma,\tilde{f})$ be a good resolution
of the corresponding rational map $\overline{f}:\mathbb{P}\dashrightarrow\mathbb{P}^{1}$,
and let $\varphi:\tilde{\mathbb{P}}\dashrightarrow\tilde{\mathbb{P}}'$
be a relative MMP. Then the following hold:

a) If $H\cap S$ is irreducible, then $\tilde{f}':\tilde{\mathbb{P}}'\rightarrow\mathbb{P}^{1}$
is a del Pezzo fibration of degree $d$. 

b) If $d=2$ and $H\cap S$ is reducible, then $\tilde{f}':\tilde{\mathbb{P}}'\rightarrow\mathbb{P}^{1}$
is a del Pezzo fibration of degree $d+1=3$. 

c) If $H\cap S$ has three irreducible components, then $\tilde{\mathbb{P}}'$
is a Mori conic bundle. \end{thm}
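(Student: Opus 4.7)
The plan is to analyze, via Proposition \ref{prop:MMP-preserving}, the birational map $\varphi_\eta:S_\eta\dashrightarrow S'_\eta$ induced by $\varphi$ on the generic fiber of $\tilde f$, and then read off the Mori fiber structure of $\tilde{\mathbb{P}}'$ from the structure of $S'_\eta$ over $\mathbb{C}(\lambda)$. By Proposition \ref{prop:MMP-preserving}(b)--(c), $\varphi_\eta$ is a sequence of $\mathbb{C}(\lambda)$-rational $(-1)$-contractions, and any curve contracted must be the successive proper transform of a component of $D_\eta:=\sigma^{-1}(H)\cap S_\eta$. By paragraph \ref{par:good-resolutuon-properties}, the components of $D_\eta$ are in bijection with those of $H\cap S$ and sum to an anti-canonical divisor of the smooth del Pezzo surface $S_\eta$ of degree $d$ over $\mathbb{C}(\lambda)$.

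In case (a), $D_\eta$ is irreducible with $D_\eta^{2}=(-K_{S_\eta})^{2}=d\geq 1$, so it is not a $(-1)$-curve; as no other horizontal component of $\sigma^{-1}(H)$ is available to contract, $\varphi_\eta$ is an isomorphism and $S'_\eta\simeq S_\eta$ is a smooth del Pezzo surface of degree $d$. In case (b), $D_\eta=E_1+E_2$ with $E_1,E_2$ defined over $\mathbb{C}(\lambda)$, both $(-1)$-curves with $E_1\cdot E_2=2$; one of them, say $E_1$, gets contracted during the MMP, and the proper transform $\overline{E_2}$ then has self-intersection $E_2^{2}+2(E_1\cdot E_2)=3$ and is linearly equivalent to $-K_{S'_\eta}$ on the resulting degree $3$ del Pezzo, hence can no longer be a $(-1)$-curve. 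So $S'_\eta$ is a smooth del Pezzo surface of degree $3$. In case (c), Proposition \ref{prop:Embedded-dP-summary} forces $d=3$ and $D_\eta=\ell_1+\ell_2+\ell_3$ with $\ell_i\cdot\ell_j=1$; contracting one, say $\ell_1$, produces on the degree $4$ del Pezzo $S'_\eta$ two curves $\overline{\ell_2},\overline{\ell_3}$ of self-intersection $0$ and anti-canonical degree $2$, which define $\mathbb{P}^1$-fibrations but are not contractible as $(-1)$-curves, so $\varphi_\eta$ terminates there.

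It remains to read off the Mori fiber type from $S'_\eta$. In cases (a) and (b), one checks that $\rho(S'_\eta)$ over $\mathbb{C}(\lambda)$ equals $1$: the only $\mathbb{C}(\lambda)$-rational divisor classes are multiples of $-K_{S'_\eta}$, since in (b) the classes of $E_1$ and $E_2$ have collapsed into $[-K_{S'_\eta}]$ after the contraction. It follows that $\tilde f':\tilde{\mathbb{P}}'\to\mathbb{P}^1$ has relative Picard number $1$, so it is a del Pezzo fibration of the claimed degree. In case (c), $S'_\eta$ has Picard rank at least $2$ over $\mathbb{C}(\lambda)$, spanned by $-K_{S'_\eta}$ and $[\overline{\ell_2}]$, and the $\mathbb{P}^1$-fibration carried by $\overline{\ell_2}$ corresponds to the remaining extremal ray of $\overline{NE}(\tilde{\mathbb{P}}'/\mathbb{P}^1)$; the contraction of this ray realizes $\tilde{\mathbb{P}}'$ as a Mori conic bundle $\xi:\tilde{\mathbb{P}}'\to W$ over a normal projective surface $W$ admitting a natural fibration $q:W\to\mathbb{P}^1$ compatible with $\tilde f'$.

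The main obstacle is the Picard rank verification, which requires controlling the monodromy of $(-1)$-curves as the member of $\mathcal{L}$ varies. Concretely, one must show that the only divisor classes on $S_{\overline{\eta}}$ that descend to $\mathbb{C}(\lambda)$ are those generated by the components of $D_\eta$; this should follow by combining Lemma \ref{lem:irreducible-members} with an explicit monodromy computation performed in the coordinates of $\S$~\ref{sub:Coordinate-presentation}, exploiting that the pencil contains the extremely degenerate member $eH$ near which the monodromy is controlled. A secondary subtlety, in case (c), is to verify that the target $W$ of the terminal extremal contraction is actually a normal projective surface fibered over $\mathbb{P}^1$ rather than $\mathbb{P}^1$ itself.
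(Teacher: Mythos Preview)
Your overall strategy---analyze the induced map $\varphi_\eta:S_\eta\to S'_\eta$ on generic fibers, then read off the Mori fiber type from $\rho(S'_\eta)$---is sound in outline, but there is a genuine gap in case (b), and your proposed fix via monodromy neither closes it nor is needed.

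The gap is the assertion ``one of them, say $E_1$, gets contracted during the MMP.'' A relative MMP on $\tilde{\mathbb P}$ could, a priori, terminate after only flips and contractions of vertical components over $\infty$, leaving both horizontal components of $\sigma^{-1}(H)$ intact; then $S'_\eta\simeq S_\eta$ is a degree~$2$ del Pezzo with $\rho(S'_\eta)\geq 2$ (the two $(-1)$-curves $E_1,E_2$ are independent and $\mathbb C(\lambda)$-rational), which would force a Mori conic bundle output and contradict the claim. Your monodromy proposal cannot rule this out: even if you establish that $\mathrm{NS}(S_\eta)$ is generated by the components of $D_\eta$, that only tells you $\rho(S_\eta)=2$, which is compatible with both $h_\varphi=0$ (conic bundle) and $h_\varphi=1$ (degree~$3$ del Pezzo fibration). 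The paper handles this by a direct contradiction: assuming $h_\varphi=0$, the induced conic bundle $\xi_\eta:S'_\eta\to\mathbb P^1_{\mathbb C(\lambda)}$ would have $C_1,C_2$ either both sections or one a $2$-section with the other in a fiber, and each possibility is excluded by elementary intersection identities (e.g.\ $C_2\sim C_1+a\ell$ forces $2=-1+a$ and $-1=-1+2a$ simultaneously). The same style of argument is used in case~(c) to show $h_\varphi\ge 1$; you simply assert it there as well, though in~(c) the omission is harmless since $\rho(S'_\eta)\ge 2$ holds regardless.

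More importantly, the paper bypasses your ``main obstacle'' entirely. Rather than computing $\rho(S'_\eta)$ by monodromy, it proves a small counting lemma (Lemma~\ref{lem:Mcb-characterization}): if the output is a Mori conic bundle, a Picard-number bookkeeping on the total spaces $\tilde{\mathbb P}$, $\tilde{\mathbb P}'$, and $W$ yields $r=h_\varphi+2$, where $r$ is the number of components of $H\cap S$. This immediately excludes a conic-bundle output in case~(a) ($r=1$, $h_\varphi=0$) and in case~(b) once $h_\varphi=1$ is known ($r=2\ne 3$). So the paper's route is: first pin down $h_\varphi$ by the contradiction arguments above, then apply the lemma; no arithmetic of the generic fiber beyond the visible curves $C_i$ is required.
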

\begin{proof}
If $H\cap S$ is irreducible then $\sigma^{-1}(H)$ has a unique horizontal
irreducible component, whose intersection with the generic fiber $S_{\eta}$
of $\tilde{f}:\tilde{\mathbb{P}}\rightarrow\mathbb{P}^{1}$ is an
irreducible anti-canonical divisor with self-intersection $d$. So
with the notation of section \ref{sub:Relative-MMP} and Proposition
\ref{prop:MMP-preserving}, it follows that at each intermediate step
$\varphi_{k}:\tilde{\mathbb{P}}_{k-1}\dashrightarrow\tilde{\mathbb{P}}_{k}$
of $\varphi$, the intersection of $\Delta_{k-1}$ with the generic
fiber of $\tilde{f}_{k-1}:\tilde{\mathbb{P}}_{k-1}\rightarrow\mathbb{P}^{1}$
is an irreducible curve with non negative self-intersection, which
is therefore not contracted by $\varphi_{k}$. So $\varphi$ does
not contract the unique horizontal irreducible component of $\sigma^{-1}(H)$.
It follows that $\varphi$ restricts to an isomorphism between the
generic fibers of $\tilde{f}:\tilde{\mathbb{P}}\rightarrow\mathbb{P}^{1}$
and $\tilde{f}':\tilde{\mathbb{P}}'\rightarrow\mathbb{P}^{1}$, the
former being a smooth del Pezzo surface of degree $d$ over the function
field $\mathbb{C}(\lambda)$ of $\mathbb{P}^{1}$ by virtue of $\S$
\ref{par:good-resolutuon-properties}. On the other hand, Lemma \ref{lem:Mcb-characterization}
below implies that $\tilde{f}':\tilde{\mathbb{P}}'\rightarrow\mathbb{P}^{1}$
cannot be a Mori conic bundle, and so $\tilde{f}':\tilde{\mathbb{P}}'\rightarrow\mathbb{P}^{1}$
is a del Pezzo fibration of degree $d$. 

If $d=2$ and $H\cap S$ is reducible, then $\sigma^{-1}(H)$ consists
of two horizontal irreducible components, and its intersection with
the generic fiber $S_{\eta}$ of $\tilde{f}:\tilde{\mathbb{P}}\rightarrow\mathbb{P}^{1}$
is a reduced anti-canonical divisor whose support consists of the
union of two $(-1)$-curves $C_{1}$ and $C_{2}$ defined over $\mathbb{C}(\lambda)$
intersecting each other twice, either with multiplicity $2$ at a
unique $\mathbb{C}(\lambda)$-rational point, or transversally at
a pair of distinct $\mathbb{C}(\lambda)$-rational points, or at unique
point whose residue field is a quadratic extension of $\mathbb{C}(\lambda)$
(see \ref{par:good-resolutuon-properties}). These two curves being
independent in the Néron-Severi group of $S_{\eta}$, the Picard number
$\rho(S_{\eta})$ is bigger or equal to $2$. If $\varphi$ does not
contract any horizontal component of $\sigma^{-1}(H)$ then $\varphi$
restricts to an isomorphism between $S_{\eta}$ and the generic fiber
$S_{\eta}'$ of $\tilde{f}':\tilde{\mathbb{P}}'\rightarrow\mathbb{P}^{1}$.
Since $\rho(S_{\eta}')=\rho(S_{\eta})\geq2$, this implies that $\tilde{f}':\tilde{\mathbb{P}}'\rightarrow\mathbb{P}^{1}$
is a Mori conic bundle $\xi:\tilde{\mathbb{P}}'\rightarrow W$ over
a normal projective surface $q:W\rightarrow\mathbb{P}^{1}$. Furthermore,
the general fibers of $\tilde{f}'$ being rational, so are the general
fibers of $q$, implying that $q:W\rightarrow\mathbb{P}^{1}$ is a
$\mathbb{P}^{1}$-fibration. Restricting $\xi$ over the generic point
$\eta$ of $\mathbb{P}^{1}$, we obtain a Mori conic bundle $\xi_{\eta}:S'_{\eta}\rightarrow W_{\eta}\simeq\mathbb{P}_{\mathbb{C}(\lambda)}^{1}$
defined over $\mathbb{C}(\lambda)$. Letting $C_{1}'$ and $C_{2}'$
be the images of $C_{1}$ and $C_{2}$ respectively in $S_{\eta}'$,
we have $-K_{S_{\eta}'}\sim C_{1}'+C_{2}'$ and since $(-K_{S_{\eta}'}\cdot\ell)=2$
for every general $\mathbb{C}(\lambda)$-rational fiber $\ell$ of
$\xi_{\eta}$, it follows that either $C_{1}'$ and $C_{2}'$ are
both sections of $\xi_{\eta}$ or, up to a permutation, that $C_{1}'$
is a $2$-section of $\xi_{\eta}$ while $C_{2}'$ is contained in
a fiber. The second possibility is excluded because a Mori conic bundle
over $\mathbb{P}_{\mathbb{C}(\lambda)}^{1}$ does not contain any
$(-1)$-curve defined over $\mathbb{C}(\lambda)$ in its closed fibers.
In the first case, since the relative Picard number $\rho(S'_{\eta}/\mathbb{P}_{\mathbb{C}(\lambda)}^{1})$
is equal to $1$, we would have $C_{2}'\sim C_{1}'+a\ell$ for some
$a\in\mathbb{Q}$ such that $2=C_{1}'\cdot C_{2}'=(C_{1}')^{2}+a=-1+a$
and $-1=(C_{2}')^{2}=\left(C_{1}'\right)^{2}+2a=-1+2a$, which is
absurd. So $\varphi$ contracts at least one of the two horizontal
irreducible components of $\sigma^{-1}(H)$, say the one intersecting
$S_{\eta}$ along $C_{1}$. Letting $\varphi_{k}:\tilde{\mathbb{P}}_{k-1}\dashrightarrow\tilde{\mathbb{P}}_{k}$
be the intermediate step of $\varphi$ at which this contraction occurs,
the induced morphism $\varphi_{k,\eta}:S_{k-1,\eta}\rightarrow S_{k,\eta}$
between the generic fibers of $\tilde{f}_{k-1}:\tilde{\mathbb{P}}_{k-1}\rightarrow\mathbb{P}^{1}$
and $\tilde{f}_{k}:\tilde{\mathbb{P}}_{k}\rightarrow\mathbb{P}^{1}$
coincides with the contraction of $C_{1}$. So $S_{k,\eta}$ is a
smooth del Pezzo surface of degree $3$ defined over $\mathbb{C}(\lambda)$,
which intersects the proper transform $\Delta_{k}$ of $\sigma^{-1}(H)$
along the image of $C_{2}$. The latter being an irreducible $\mathbb{C}(\lambda)$-rational
curve with self-intersection $3$, the same argument as in the previous
case implies that the corresponding horizontal irreducible component
of $\Delta_{k}$ cannot be contracted at any further step $\varphi_{k'}$,
$k'\geq k+1$, of $\varphi$. So $\varphi$ contracts exactly one
irreducible component of $\sigma^{-1}(H)$ and the generic fiber $S_{\eta}'$
is isomorphic to the image of $S_{\eta}$ by the contraction of the
corresponding $(-1)$-curve defined over $\mathbb{C}(\lambda)$. Thus
$S_{\eta}'$ is a smooth del Pezzo surface of degree $3$ defined
over $\mathbb{C}(\lambda)$. We deduce again from Lemma \ref{lem:Mcb-characterization}
that $\tilde{f}':\tilde{\mathbb{P}}'\rightarrow\mathbb{P}^{1}$ cannot
be a Mori conic bundle, and so $\tilde{f}':\tilde{\mathbb{P}}'\rightarrow\mathbb{P}^{1}$
is a del Pezzo fibration of degree $3$. 

Finally, if $d=3$ and $H\cap S$ has three irreducible components,
then the intersection of $\sigma^{-1}(H)$ with $S_{\eta}$ is a reduced
anti-canonical divisor on $S_{\eta}$ whose support consists of the
union of three $(-1)$-curves $C_{1}$, $C_{2}$ and $C_{3}$ defined
over $\mathbb{C}(\lambda)$ and intersecting each other transversally
at $\mathbb{C}(\lambda)$-rational points. If $\varphi$ does not
contract any horizontal irreducible component of $\sigma^{-1}(H)$,
then it induces an isomorphism between $S_{\eta}$ and the generic
fiber $S_{\eta}'$ of $\tilde{f}':\tilde{\mathbb{P}}'\rightarrow\mathbb{P}^{1}$.
The latter is thus a smooth del Pezzo surface of degree $3$ defined
over $\mathbb{C}(\lambda)$ and having the sum $C_{1}'+C_{2}'+C_{3}'$
of the images of the $C_{i}$'s as an anti-canonical divisor. The
Picard number of $S_{\eta}'$ is thus strictly bigger than one, and
so $\tilde{f}':\tilde{\mathbb{P}}'\rightarrow\mathbb{P}^{1}$ is again
a Mori conic bundle, restricting over the generic point $\eta$ of
$\mathbb{P}^{1}$ to a Mori conic bundle $\xi_{\eta}:S_{\eta}'\rightarrow\mathbb{P}_{\mathbb{C}(\lambda)}^{1}$
defined over $\mathbb{C}(\lambda)$. Since $(-K_{S_{\eta}'}\cdot\ell)=2$
for every general $\mathbb{C}(\lambda)$-rational fiber $\ell$ of
$\xi_{\eta}$, either two of the $C_{i}'$ are sections of $\xi_{\eta}$
and the third one is contained in a fiber or one of the $C_{i}'$
is a $2$-section of $\xi_{\eta}$ and the two other ones are contained
in a fiber. In each case, there would exists a closed fiber of $\xi_{\eta}:S_{\eta}'\rightarrow\mathbb{P}_{\mathbb{C}(\lambda)}^{1}$
containing a $(-1)$-curve defined over $\mathbb{C}(\lambda)$, which
is impossible. So $\varphi$ contracts at least one horizontal irreducible
component of $\sigma^{-1}(H)$, say the one intersecting $S_{\eta}$
along $C_{1}$. The proper transforms of $C_{2}$ and $C_{3}$ in
the image of $S_{\eta}$ by the induced contraction are $0$-curves
intersecting each other twice at $\mathbb{C}(\lambda)$-rational points.
The same argument as in the previous case implies that no other horizontal
irreducible component of $\sigma^{-1}(H)$ is contracted by $\varphi$.
So $S_{\eta}'$ is isomorphic to the image of $S_{\eta}$ by the contraction
of $C_{1}$, hence is a smooth del Pezzo surface of degree $4$ defined
over $\mathbb{C}(\lambda)$, having the sum $C_{2}'+C_{3}'$ of the
images of $C_{2}$ and $C_{3}$ as an anti-canonical divisor. The
Picard number $\rho(S_{\eta}')$ is thus bigger or equal to $2$ and
so, $\tilde{f}':\tilde{\mathbb{P}}'\rightarrow\mathbb{P}^{1}$ is
necessarily a Mori conic bundle. 
\end{proof}
In the proof of Theorem \ref{thm:MMP-outputs} above, we used the
following criterion for the output of a relative MMP $\varphi:\tilde{\mathbb{P}}\dashrightarrow\tilde{\mathbb{P}}'$
to be a Mori conic bundle:
\begin{lem}
\label{lem:Mcb-characterization}With the notation above, let $r\in\left\{ 1,2,3\right\} $
and $h_{\varphi}\in\{0,1\}$ be the number of irreducible components
of $H\cap S$ and the number of horizontal irreducible component of
$\sigma^{-1}(H)$ contracted by $\varphi:\tilde{\mathbb{P}}\dashrightarrow\tilde{\mathbb{P}}'$.
If $\tilde{\mathbb{P}}'$ is a Mori conic bundle $\tilde{f}'=q\circ\xi:\tilde{\mathbb{P}}'\rightarrow W\rightarrow\mathbb{P}^{1}$,
then $r=h_{\varphi}+2$. \end{lem}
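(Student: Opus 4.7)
The plan is to assume that $\tilde{\mathbb{P}}'$ is a Mori conic bundle and to deduce $r = h_{\varphi} + 2$ by ruling out $r - h_{\varphi} \in \{0, 1, 3\}$. First I would set up the generic fiber picture: by Proposition~\ref{prop:MMP-preserving} and $\S$\ref{par:good-resolutuon-properties}, $S'_{\eta}$ is a smooth del Pezzo surface over $k = \mathbb{C}(\lambda)$ of degree $d' = d + h_{\varphi}$, obtained from $S_{\eta}$ by blowing down $h_{\varphi}$ disjoint $(-1)$-curves defined over $k$, and the proper transform $D'_{\eta}$ on $S'_{\eta}$ of the non-contracted horizontal components of $\sigma^{-1}(H) \cap S_{\eta}$ is an anti-canonical divisor with exactly $r - h_{\varphi}$ irreducible $k$-rational components $C'_{1}, \ldots, C'_{r-h_{\varphi}}$. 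Under the Mori conic bundle hypothesis, the vanishing $\mathrm{Br}(\mathbb{C}(\lambda)) = 0$ given by Tsen identifies $W_{\eta} \simeq \mathbb{P}^{1}_{k}$, making $\xi_{\eta}: S'_{\eta} \to \mathbb{P}^{1}_{k}$ a Mori conic bundle with $\rho(S'_{\eta}) = 2$. Writing $F$ for a general fiber class of $\xi_{\eta}$, we have $F^{2} = 0$, $-K_{S'_{\eta}} \cdot F = 2$, and the non-negative integers $a_{i} := C'_{i} \cdot F$ satisfy $\sum_{i} a_{i} = 2$.

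For the upper bound, a closed fiber of $\xi_{\eta}$ cannot contain a $(-1)$-curve defined over $k$, since each such fiber is either geometrically irreducible or splits over $\bar{k}$ into a Galois-conjugate pair of $(-1)$-curves, and splitting into two individually $k$-rational components would raise the relative Picard number of $\xi_{\eta}$ above $1$. If $r - h_{\varphi} = 3$, the setup forces $r = 3$, $h_{\varphi} = 0$, $d = 3$, with each $C'_{i}$ a $(-1)$-curve on $S_{\eta}$ by Proposition~\ref{prop:Embedded-dP-summary}; then $\sum a_{i} = 2$ forces some $a_{j} = 0$, placing a $k$-rational $(-1)$-curve in a closed fiber, a contradiction. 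The case $r - h_{\varphi} = 0$ is immediate since $D'_{\eta} \sim -K_{S'_{\eta}} \neq 0$.

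The delicate case is $r - h_{\varphi} = 1$, where $C'_{1} \sim -K_{S'_{\eta}}$ would be an irreducible $k$-rational $2$-section of $\xi_{\eta}$ with $(C'_{1})^{2} = d' \in \{1, 2, 3\}$ (namely $d' = d$ if $h_{\varphi} = 0$, and $d' = d + 1 = 3$ if $(r, h_{\varphi}) = (2, 1)$). The envisioned argument is a Picard lattice contradiction: if $\xi_{\eta}$ admits a $k$-rational section $L$, then $\{L, F\}$ generates a rank-$2$ sublattice of $\mathrm{Pic}(S'_{\eta})$ of discriminant $-1$, and by integrality of the intersection form on $\mathrm{Pic}(S'_{\eta})$ this sublattice must equal all of $\mathrm{Pic}(S'_{\eta}) = \mathbb{Z}\langle L, F\rangle$; writing $-K_{S'_{\eta}} = 2L + bF$ with $b \in \mathbb{Z}$ and computing $(-K_{S'_{\eta}})^{2} = 4(L^{2} + b) = d'$ forces $4 \mid d'$, contradicting $d' \in \{1, 2, 3\}$.

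The hard part is the existence of such a section, since Tsen--Lang applied to the $C_{2}$-field $\mathbb{C}(\lambda, t)$ does not directly furnish a $k(t)$-rational point of the generic fiber conic of $\xi_{\eta}$. To produce one I would exploit the abundance of $k$-rational points of $C'_{1}$, inherited from the $\mathbb{C}$-rational points of $H \cap S$ when $r = 1$, or from the distinguished contracted point $\sigma(C_{1})$ when $(r, h_{\varphi}) = (2, 1)$: through any $k$-rational point $p \in C'_{1}$, the second intersection of $C'_{1}$ with $\xi_{\eta}^{-1}(\xi_{\eta}(p))$ is $k$-rational by Galois-stability, trivializing the corresponding closed fiber, and a Hilbert-irreducibility-type argument applied to the resulting infinite family of $k$-trivial closed fibers is intended to force the generic fiber conic to be $k(t)$-trivial, producing the required section and completing the argument.
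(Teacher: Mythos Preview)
Your approach is genuinely different from the paper's, and the case $r - h_{\varphi} = 1$ contains a real gap.

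The paper's proof is a global Picard-number count requiring no arithmetic on the generic fiber. One first checks that $\xi^{-1}(C)$ is irreducible for every irreducible curve $C \subset W$ (a short intersection argument with the extremal ray defining $\xi$), so the number $\nu_{\infty}$ of irreducible components of $q^{-1}(\infty)$ equals that of $(\tilde{f}')^{-1}(\infty)$, giving $\rho(W) = \nu_{\infty} + 1$. On the other side $\rho(\tilde{\mathbb{P}}) = 1 + r + e_{v}$, where $e_{v}$ is the number of vertical $\sigma$-exceptional divisors, and $\rho(\tilde{\mathbb{P}}') = \rho(\tilde{\mathbb{P}}) - h_{\varphi} - v_{\varphi}$ while $\nu_{\infty} = 1 + e_{v} - v_{\varphi}$. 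The identity $\rho(\tilde{\mathbb{P}}') = \rho(W) + 1$ then yields $r = h_{\varphi} + 2$ in one line. This completely bypasses the question of whether $\xi_{\eta}$ admits a section.

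Your treatment of $r - h_{\varphi} \in \{0, 3\}$ is fine, but the final paragraph does not close the case $r - h_{\varphi} = 1$. A ``Hilbert-irreducibility-type argument'' runs in the wrong direction: Hilbert irreducibility transports a generic property down to many closed fibers, never the converse. Over $k = \mathbb{C}(\lambda)$ the situation is in fact maximally bad for your purpose: by Tsen \emph{every} conic over $k$ already has a $k$-point, so for \emph{any} conic bundle over $\mathbb{P}^{1}_{k}$ all closed fibers over $k$-rational points are $k$-trivial; yet the conic $X^{2} - \lambda Y^{2} - t Z^{2} = 0$ over $k(t)$ has no $k(t)$-point (a short $t$-adic valuation check using that $\lambda$ is not a square in $k$), so the corresponding bundle has no section. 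The abundance of $k$-trivial closed fibers you extract from rational points of $C'_{1}$ therefore carries no information whatsoever toward the existence of a section, and without one your lattice computation never starts. The paper's Picard bookkeeping sidesteps this difficulty entirely.
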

\begin{proof}
We first observe that the inverse image by $\xi$ of every irreducible
curve $C\subset W$ is again irreducible. Indeed, assuming on the
contrary that $\xi^{-1}(C)$ has at least two irreducible components
$F_{1}$ and $F_{2}$ such that $F_{1}\cap F_{2}\neq\emptyset$, we
can choose an irreducible curve $\ell_{1}\subset F_{1}$ whose class
$[\ell_{1}]$ in $\overline{NE}(\tilde{\mathbb{P}}')$ belongs to
the extremal ray giving rise to $\xi$ and such that $\ell_{1}\cap F_{2}\neq\emptyset$.
Then for a general fiber $\ell$ of $\xi$, we have by definition
$[\ell]=a[\ell_{1}]$ for some $a>0$, but since $\ell$ is disjoint
from $F_{2}$, this would lead to the contradiction $0=F_{2}\cdot\ell=aF_{2}\cdot\ell_{1}>0$.
Since all fibers of $\tilde{f}'$ except maybe $(\tilde{f}')^{-1}(\infty)$
are irreducible and rational, it follows that $q:W\rightarrow\mathbb{P}^{1}$
is a $\mathbb{P}^{1}$-fibration with $\eta^{-1}(\infty)$ as a unique
possibly reducible fiber. In particular, the Picard number $\rho(W)$
is equal to $\nu_{\infty}+1$, where $\nu_{\infty}$ denotes the number
of irreducible components of $\eta^{-1}(\infty)$, which by the previous
observation is equal to the number of irreducible components of $(\tilde{f}')^{-1}(\infty)$.
Since $(\tilde{\mathbb{P}},\sigma,\tilde{f})$ is a good resolution,
the number of horizontal irreducible components of $\sigma^{-1}(H)$
is equal to $r$. So the Picard number $\rho(\tilde{\mathbb{P}})$
of $\tilde{\mathbb{P}}$ is equal to $\rho(\mathbb{P})+r+e_{v}=1+r+e_{v}$,
where $e_{v}$ denote the number of vertical exceptional divisors
of $\sigma$, all of them being contained in $\tilde{f}^{-1}(\infty)$
(see $\S$ \ref{par:good-resolutuon-properties}). We obtain 
\[
\nu_{\infty}+1=\rho(W)=\rho(\tilde{\mathbb{P}}')-1=1+r+e_{v}-h_{\varphi}-v_{\varphi}-1=(1+e_{v}-v_{\varphi})+(r-h_{\varphi})-1=\nu_{\infty}+(r-h_{\varphi})-1
\]
where $v_{\varphi}$ denotes the number of vertical component of $\sigma^{-1}(H)$
contracted by $\varphi$. So $r=h_{\varphi}+2$. 
\end{proof}
\begin{parn}  \label{par:cubic-two-components} The remaining case
where $d=3$ and $H\cap S$ has two irreducible components is more
intricate. Here given a good resolution $(\tilde{\mathbb{P}},\sigma,\tilde{f})$
of the rational map $\overline{f}:\mathbb{P}=\mathbb{P}^{3}\dashrightarrow\mathbb{P}^{1}$,
the intersection of $\sigma^{-1}(H)$ with the generic fiber $S_{\eta}$
of $\tilde{f}:\tilde{\mathbb{P}}\rightarrow\mathbb{P}^{1}$ is a reduced
anti-canonical divisor whose support consists of the union of a $(-1)$-curve
$C_{1}$ and of a $0$-curve $C_{2}$ both defined over $\mathbb{C}(\lambda)$.
The same argument as in the proof of Theorem \ref{thm:MMP-outputs}
for the case $d=2$ with $H\cap S$ reducible implies that a relative
MMP $\varphi:\tilde{\mathbb{P}}\dashrightarrow\tilde{\mathbb{P}}'$
can contract at most one horizontal component of $\sigma^{-1}(H)$,
namely the one intersecting $S_{\eta}$ along $C_{1}$. If this component
is indeed contracted by $\varphi$, then the image of $S_{\eta}$
by the induced birational morphism is a smooth del Pezzo surface of
degree $4$ defined over $\mathbb{C}(\lambda)$ and the output $\tilde{f}':\tilde{\mathbb{P}}'\rightarrow\mathbb{P}^{1}$
is a del Pezzo fibration of degree $4$ by virtue of Lemma \ref{lem:Mcb-characterization}.
Otherwise, if $\varphi$ does not contract any horizontal irreducible
component of $\sigma^{-1}(H)$ then $\varphi$ restricts to an isomorphism
between $S_{\eta}$ and the generic fiber $S_{\eta}'$ of $\tilde{f}':\tilde{\mathbb{P}}'\rightarrow\mathbb{P}^{1}$.
Since $C_{1}+C_{2}$ is an anti-canonical divisor on $S_{\eta}$,
$\rho(S_{\eta})\geq2$ and so $\tilde{f}':\tilde{\mathbb{P}}'\rightarrow\mathbb{P}^{1}$
is necessarily a Mori conic bundle $\xi:\tilde{\mathbb{P}}'\rightarrow W$
over a normal projective surface $q:W\rightarrow\mathbb{P}^{1}$,
whose restriction over the generic point $\eta$ of $\mathbb{P}^{1}$
is a Mori conic bundle $\xi_{\eta}:S_{\eta}'\rightarrow W_{\eta}\simeq\mathbb{P}_{\mathbb{C}(\lambda)}^{1}$
defined over $\mathbb{C}(\lambda)$. Since $(-K_{S_{\eta}'}\cdot\ell)=2$
for every general $\mathbb{C}(\lambda)$-rational fiber $\ell$ of
$\xi_{\eta}$ and $C_{1}$ is a $(-1)$-curve defined over $\mathbb{C}(\lambda)$,
hence cannot be contained in a fiber of $\xi_{\eta}$, the only possibilities
are that either $C_{1}$ and $C_{2}$ are both sections of $\xi_{\eta}$
or that $C_{1}$ is a $2$-section of $\xi_{\eta}$ while $C_{2}$
is a full fiber of it. Similarly as in the case $d=2$ in the proof
of Theorem \ref{thm:MMP-outputs} above, the first possibility is
excluded by the fact that $\rho(S_{\eta}'/\mathbb{P}_{\mathbb{C}(\lambda)}^{1})=1$:
indeed, we would have $C_{2}\sim C_{1}+a\ell$ for some $a\in\mathbb{Q}$
satisfying simultaneously the identities $0=C_{2}^{2}=C_{1}^{2}+2a=-1+2a$
and $2=C_{2}\cdot C_{1}=C_{1}^{2}+a=-1+a$, which is impossible. But
in contrast with the case $d=2$, the second possibility cannot be
excluded. Actually a smooth cubic surface $S_{\eta}'\subset\mathbb{P}_{\mathbb{C}(\lambda)}^{3}$
containing a $(-1)$-curve $C_{1}$ defined over $\mathbb{C}(\lambda)$
always admit a conic bundle structure $\pi:S_{\eta'}\rightarrow\mathbb{P}_{\mathbb{C}(\lambda)}^{1}$
with five degenerate fibers, defined by the mobile part of the restriction
to $S'_{\eta}$ of the pencil of hyperplanes in $\mathbb{P}_{\mathbb{C}(\lambda)}^{3}$
containing $C_{1}$. 

\end{parn}

So in contrast with the other cases, this suggests that the nature
of the output $\tilde{\mathbb{P}}'$ might depend on the chosen good
resolution $(\tilde{\mathbb{P}},\sigma,\tilde{f})$ and on the relative
MMP $\varphi:\tilde{\mathbb{P}}\dashrightarrow\tilde{\mathbb{P}}'$.
Partial results on the structure of $\tilde{\mathbb{P}}'$ can be
obtained by a more careful study of relative MMPs ran from particular
explicit good resolutions $(\tilde{\mathbb{P}},\sigma,\tilde{f})$,
but a complete discussion would lead us far beyond the intended aim
of this article. The following result, which we mention without proof
referring the reader to the forthcoming paper \cite{DK16} for the
detail, asserts the existence of relative MMPs whose outputs are del
Pezzo fibrations of degre $4$. In contrast, we do not know examples
for which the output is a Mori conic bundle (see also Remark \ref{Rq:MCb-Degree-3}
below). 
\begin{prop}
\label{prop:cubic-dP4-termination} Let $S\subset\mathbb{P}^{3}$
be a smooth cubic surface, let $H\in\left|\mathcal{O}_{\mathbb{P}^{3}}(1)\right|$
be a hyperplane intersecting $S$ along the union of a line and smooth
conic, let $\mathcal{L}\subset\left|\mathcal{O}_{\mathbb{P}^{3}}(3)\right|$
be the pencil generated by $S$ and $3H$ and let $\overline{f}:\mathbb{P}^{3}\dashrightarrow\mathbb{P}^{1}$
be the corresponding rational map. Then there exists a good resolution
$(\tilde{\mathbb{P}},\sigma,\tilde{f})$ and a MMP $\varphi:\tilde{\mathbb{P}}\dashrightarrow\tilde{\mathbb{P}}'$
relative to $\tilde{f}:\tilde{\mathbb{P}}\rightarrow\mathbb{P}^{1}$
whose output is a del Pezzo fibration $\tilde{f}':\tilde{\mathbb{P}}'\rightarrow\mathbb{P}^{1}$
of degree $4$. 
\end{prop}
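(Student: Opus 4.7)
The plan is to exhibit an explicit good resolution $\sigma:\tilde{\mathbb{P}}\to\mathbb{P}^{3}$ of $\overline{f}$ and then to identify within the relative cone $\overline{NE}(\tilde{\mathbb{P}}/\mathbb{P}^{1})$ a $K_{\tilde{\mathbb{P}}}$-negative extremal ray whose divisorial contraction removes the horizontal component of $\sigma^{-1}(H)$ lying over the line $\ell\subset H\cap S$. Once this is done, the analysis in paragraph \ref{par:cubic-two-components} applies: the generic fiber becomes a smooth del Pezzo surface of degree $4$ over $\mathbb{C}(\lambda)$, and Lemma \ref{lem:Mcb-characterization}, with $r=2$ and $h_\varphi=1$, rules out a Mori conic bundle as the final output. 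Termination of the MMP combined with Proposition \ref{prop:MMP-preserving} then forces $\tilde{f}':\tilde{\mathbb{P}}'\to\mathbb{P}^{1}$ to be a del Pezzo fibration of degree $4$ containing $\mathbb{A}^{3}$ as an open subset.

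For the construction of $(\tilde{\mathbb{P}},\sigma,\tilde{f})$, I would first blow up $\mathbb{P}^{3}$ along $\ell$, then along the proper transform of the smooth conic $C$, and perform additional blow-ups centered over $\ell\cap C\subset H$ and over irreducible components of $\overline{f}^{-1}(\infty)$ until the indeterminacies are resolved, the total transform of $H$ is a simple normal crossings divisor and $\tilde{\mathbb{P}}$ is smooth; all properties of Definition \ref{def:good-resolution} are then automatic. The boundary $\sigma^{-1}(H)$ acquires two horizontal components $E_\ell$ and $E_C$ meeting the generic fiber $S_\eta$ along the $(-1)$-curve $C_1$ and the $0$-curve $C_2$ respectively, together with a finite collection of vertical exceptional components $E_i^{\mathrm{vert}}$ supported in $\tilde{f}^{-1}(\infty)$.

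The heart of the argument is the identification and contraction of the relevant extremal ray. For every general closed point $t\in\mathbb{P}^{1}$, the smooth fiber $S_t=\tilde{f}^{-1}(t)$ meets $E_\ell$ along a $(-1)$-curve $C_1^{(t)}$, and adjunction applied with $S_t\cdot C_1^{(t)}=0$ gives
\[
K_{\tilde{\mathbb{P}}}\cdot C_1^{(t)} \;=\; K_{S_t}\cdot C_1^{(t)} \;=\; -1,
\]
so the ray $R=\mathbb{R}_{\geq 0}[C_1^{(t)}]\subset\overline{NE}(\tilde{\mathbb{P}}/\mathbb{P}^{1})$ is $K_{\tilde{\mathbb{P}}}$-negative. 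I would verify that $R$ is extremal by constructing a nef supporting $\mathbb{Q}$-divisor of the form $aF+bE_\ell+\sum c_i E_i^{\mathrm{vert}}$, where $F$ is a general fiber of $\tilde{f}$, which vanishes on $[C_1^{(t)}]$ and is strictly positive on every other extremal class of $\overline{NE}(\tilde{\mathbb{P}}/\mathbb{P}^{1})$; the associated Mori contraction is then divisorial with exceptional locus exactly $E_\ell$, since $E_\ell$ is the surface swept out by the curves $C_1^{(t)}$. After this step, $E_C$ can no longer be contracted: by Proposition \ref{prop:MMP-preserving}(c) any subsequent divisorial contraction restricts on a general closed fiber to the contraction of disjoint $(-1)$-curves, whereas $E_C$ now meets a general fiber (a smooth degree $4$ del Pezzo) in a curve of self-intersection $4$. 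The final output of the MMP therefore retains exactly one horizontal boundary component and, by Lemma \ref{lem:Mcb-characterization}, must be a del Pezzo fibration of degree $4$. The main obstacle is the explicit extremality check for $R$: it requires the full intersection matrix on the chosen good resolution and depends on the precise configuration of vertical exceptional divisors, which is the detailed bookkeeping that the authors postpone to the companion paper \cite{DK16}.
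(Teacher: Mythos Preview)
The paper does not actually prove Proposition~\ref{prop:cubic-dP4-termination}: immediately before the statement the authors write that they ``mention [it] without proof, referring the reader to the forthcoming paper \cite{DK16} for the detail.'' So there is no proof in the present paper to compare your proposal against.

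That said, your outline is consistent with the framework the paper sets up and is a plausible reconstruction of what the deferred argument must look like. The logical skeleton is sound: once a horizontal divisorial contraction removes $E_\ell$, Proposition~\ref{prop:MMP-preserving}(c) forbids any later contraction of $E_C$ (its trace on a general fiber has self-intersection $4$, as you compute), and Lemma~\ref{lem:Mcb-characterization} with $r=2$, $h_\varphi=1$ then excludes a conic-bundle output, forcing a degree-$4$ del Pezzo fibration. You also correctly isolate the genuine content of the proof: showing that the ray $R=\mathbb{R}_{\geq 0}[C_1^{(t)}]$ is extremal in $\overline{NE}(\tilde{\mathbb{P}}/\mathbb{P}^{1})$ and that its contraction is divisorial with exceptional locus exactly $E_\ell$. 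Two cautions are worth flagging. First, your proposed supporting class $aF+bE_\ell+\sum c_iE_i^{\mathrm{vert}}$ may not suffice as written, since one typically also needs contributions from $E_C$ or from a pullback of an ample class on $\mathbb{P}^3$ to separate $R$ from rays generated by curves in $\tilde{f}^{-1}(\infty)$; the precise form depends on the full intersection matrix of the chosen resolution. Second, even after establishing extremality of $R$, one must check that no curve in $\tilde{f}^{-1}(\infty)$ lies in the same numerical class, so that the exceptional locus is exactly $E_\ell$ and the contraction is of type~E1 outside $\tilde{f}^{-1}(\infty)$. Both points are precisely the ``detailed bookkeeping'' you acknowledge is deferred to \cite{DK16}, so your proposal is an honest sketch rather than a complete proof---which is all the paper itself offers here.
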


\section{Mori conic bundles and twisted $\mathbb{A}_{*}^{1}$-fibrations }

In this section, we investigate more closely the case where a relative
MMP $\varphi:\tilde{\mathbb{P}}\dashrightarrow\tilde{\mathbb{P}}'$
ran from a good resolution $(\tilde{\mathbb{P}},\sigma,\tilde{f})$
terminates with a Mori conic bundle $\xi:\tilde{\mathbb{P}}'\rightarrow W$
over a normal projective surface $W$. According to Theorem \ref{thm:MMP-outputs}
and $\S$ \ref{par:cubic-two-components},  this occurs for all pencils
$\mathcal{L}\subset\left|\mathcal{O}_{\mathbb{P}^{3}}(3)\right|$
generated by a smooth cubic surface $S\subset\mathbb{P}^{3}$ and
three times a hyperplane $H\subset\mathbb{P}^{3}$ such that $H\cap S$
consists of three lines, and possibly for pencils for which $H\cap S$
consists of a line and smooth conic when $\varphi:\tilde{\mathbb{P}}\dashrightarrow\tilde{\mathbb{P}}'$
does not contract any horizontal irreducible component of $\sigma^{-1}(H)$. 
\begin{thm}
\label{thm:Mcb-twisted-fibrations} Let $\mathcal{L}\subset|\mathcal{O}_{\mathbb{P}^{3}}(3)|$
be a pencil as above and let $\varphi:\tilde{\mathbb{P}}\dashrightarrow\tilde{\mathbb{P}}'$
be a relative MMP ran from good resolution $(\tilde{\mathbb{P}},\sigma,\tilde{f})$
of the corresponding rational map $\overline{f}:\mathbb{P}^{3}\dashrightarrow\mathbb{P}^{1}$
whose output is a Mori conic bundle $\xi:\tilde{\mathbb{P}}'\rightarrow W$
over a normal projective surface $q:W\rightarrow\mathbb{P}^{1}$.
Then there exists an open subset $U\subset W$ isomorphic to $\mathbb{A}^{2}$
such that the induced morphism $\xi_{0}=\xi\circ\varphi\circ\sigma^{-1}:\mathbb{A}^{3}=\mathbb{P}^{3}\setminus H\rightarrow W$
factors through a twisted $\mathbb{A}_{*}^{1}$-fibration over $U$. \end{thm}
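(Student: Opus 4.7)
The plan is to analyze $\Delta':=\varphi_{*}(\sigma^{-1}(H))$ on the generic fiber $S_{\eta}'$ of $\tilde{f}'$, use this to single out a section $\Sigma$ of $q:W\rightarrow\mathbb{P}^{1}$, define $U:=(W\setminus q^{-1}(\infty))\setminus\Sigma$ and check that $U\simeq\mathbb{A}^{2}$, that $\xi_{0}$ factors through $U$, and that the resulting morphism $\mathbb{A}^{3}\rightarrow U$ is flat with a nontrivial form of $\mathbb{A}_{*}^{1}$ as generic fiber over $\mathbb{C}(U)$.

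By Theorem \ref{thm:MMP-outputs} and the analysis in \ref{par:cubic-two-components}, in both Mori conic bundle configurations (three lines with exactly one horizontal component of $\sigma^{-1}(H)$ contracted by $\varphi$, or a line plus smooth conic with no horizontal component contracted) the boundary $\Delta'$ has exactly two horizontal components $\Delta'_{f}$ and $\Delta'_{s}$, and $\Delta'\cap S_{\eta}'=C_{f}+C_{s}$ is an anti-canonical divisor on $S_{\eta}'$. Combining $\rho(S_{\eta}'/W_{\eta})=1$ and $(-K\cdot\ell)=2$ for a generic fiber $\ell$ of $\xi_{\eta}$ with the intersection data recorded in the proofs of Theorem~\ref{thm:MMP-outputs} and \ref{par:cubic-two-components}, one of these curves, which I call $C_{f}$, is numerically equivalent to $\ell$ and hence is a full fiber of $\xi_{\eta}$, while $C_{s}$ is a $2$-section.

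To construct $U$, I would use that by the proof of Lemma \ref{lem:Mcb-characterization} the morphism $q:W\rightarrow\mathbb{P}^{1}$ is a $\mathbb{P}^{1}$-fibration, so $W_{0}:=W\setminus q^{-1}(\infty)$ is a $\mathbb{P}^{1}$-bundle over $\mathbb{A}^{1}$, hence trivial and isomorphic to $\mathbb{A}^{1}\times\mathbb{P}^{1}$. Since $\xi_{\eta}(C_{f})$ is a single point of $W_{\eta}$, the image $\Sigma:=\xi(\Delta'_{f})\subset W$ is an irreducible curve whose intersection with $W_{\eta}$ is a single point, hence a section of $q$, and removing $\Sigma$ from the trivial $\mathbb{P}^{1}$-bundle $W_{0}\rightarrow\mathbb{A}^{1}$ yields $U:=W_{0}\setminus\Sigma\simeq\mathbb{A}^{2}$. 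Next I would check the factorization: the relation $q\circ\xi_{0}=f$ gives $\xi_{0}(\mathbb{A}^{3})\subset W_{0}$, and for a generic $w\in\Sigma$ the fiber $\xi^{-1}(w)$ is the corresponding specialization of $C_{f}$ and lies in $\Delta'_{f}\subset\Delta'$; by properness of $\xi$ the closed locus $\{w\in W:\xi^{-1}(w)\subset\Delta'\}$ then contains $\Sigma$ entirely, so $\xi_{0}(\mathbb{A}^{3})\cap\Sigma=\emptyset$ and $\xi_{0}(\mathbb{A}^{3})\subset U$. Conversely, for any $w\in U$ the fiber $\xi^{-1}(w)$ is disjoint from $\Delta'_{f}$ and from the vertical components of $\Delta'$ (which lie in $\tilde{f}'^{-1}(\infty)$), while meeting $\Delta'_{s}$ in a finite length-$2$ scheme, so $w\in\xi_{0}(\mathbb{A}^{3})$.

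Finally, the induced surjection $\xi_{0}:\mathbb{A}^{3}\rightarrow U$ is a twisted $\mathbb{A}_{*}^{1}$-fibration: flatness is inherited from flatness of the Mori conic bundle $\xi$, and the generic fiber of $\xi_{0}$ over $\mathrm{Spec}(\mathbb{C}(U))=\mathrm{Spec}(\mathbb{C}(W))$ is $\mathbb{P}_{\mathbb{C}(W)}^{1}\setminus Z$, where $Z$ is the length-$2$ intersection of the generic $\xi$-fiber with $\Delta'_{s}$. Since $\Delta'_{s}$ is irreducible over $\mathbb{C}$ and the morphism $\xi|_{\Delta'_{s}}:\Delta'_{s}\rightarrow W$ is generically finite of degree $2$, the induced function-field extension $\mathbb{C}(\Delta'_{s})/\mathbb{C}(W)$ is a quadratic field extension, so $Z=\mathrm{Spec}(\mathbb{C}(\Delta'_{s}))$ is a single closed point of degree $2$ and the generic fiber is a nontrivial form of $\mathbb{A}_{*}^{1}$. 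The main obstacle is the nontriviality of this extension, which reduces to the observation that a split degree-$2$ cover $\Delta'_{s}\rightarrow W$ would produce two distinct irreducible components of $\Delta'_{s}$ over $\mathbb{C}$, contradicting its irreducibility.
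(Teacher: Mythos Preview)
Your proposal is correct and follows essentially the same route as the paper: identify the two horizontal components of $\varphi_{*}(\sigma^{-1}(H))$, observe that on the generic fiber one restricts to a full $\xi_{\eta}$-fiber while the other is a $2$-section, remove the resulting section of $q$ together with $q^{-1}(\infty)$ to obtain $U\simeq\mathbb{A}^{2}$, and deduce the twisted $\mathbb{A}_{*}^{1}$-fibration from the irreducible birational $2$-section. The paper compresses the factorization through $U$ into the phrase ``by construction'' and the nontriviality of the form into ``since $E_{1}$ is an irreducible birational $2$-section''; your explicit closedness argument for $\{w:\xi^{-1}(w)\subset\Delta'\}$ and your quadratic-extension argument for $Z$ spell out exactly what those phrases mean.
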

\begin{proof}
Recall that by virtue of Proposition \ref{prop:MMP-preserving}, the
composition $\varphi\circ\sigma^{-1}:\mathbb{P}^{3}\setminus H\rightarrow\tilde{\mathbb{P}}'\setminus\varphi_{*}(\sigma^{-1}(H))$
is an isomorphism. As observed in the proof of Lemma \ref{lem:Mcb-characterization},
$q:W\rightarrow\mathbb{P}^{1}$ is a $\mathbb{P}^{1}$-fibration with
$\eta^{-1}(\infty)$ as a unique possibly reducible fiber, where $\infty=\overline{f}_{*}(H)$.
So the restriction of $q$ over $\mathbb{P}^{1}\setminus\{\infty\}$
is isomorphic to the trivial bundle $\mathbb{P}^{1}\setminus\{\infty\}\times\mathbb{P}^{1}$.
The union of all vertical components of $\varphi_{*}(\sigma^{-1}(H))$
is equal to $(\tilde{f}')^{-1}(\infty)$ (see $\S$ \ref{par:good-resolutuon-properties})
and on the other hand, it follows from the proof of Theorem \ref{thm:MMP-outputs}
and $\S$ \ref{par:cubic-two-components} that the restrictions of
the two horizontal irreducible components $E_{1}$ and $E_{2}$ of
$\varphi_{*}(\sigma^{-1}(H))$ to the generic fiber $S_{\eta}'$ of
$\tilde{f}'$ are either a pair of $0$-curves $C_{1}$ and $C_{2}$
defined over $\mathbb{C}(\lambda)$ with intersecting each other twice
at $\mathbb{C}(\lambda)$-rational points if $H\cap S$ consist of
three irreducible components, or the union of a $(-1)$-curve $C_{1}$
and a $0$-curve $C_{2}$ defined over $\mathbb{C}(\lambda)$ with
$(C_{1}\cdot C_{2})=2$ in the case where $H\cap S$ consists of two
irreducible components. In the first case, one of the curves $C_{i}$
is a $2$-section of the induced conic bundle $\xi_{\eta}:S_{\eta}'\rightarrow W_{\eta}\simeq\mathbb{P}_{\mathbb{C}(\lambda)}$
while the other one is a full fiber of it, and in the second case,
$C_{1}$ is a $2$-section of $\xi_{\eta}$ while $C_{2}$ is a full
fiber. So up to a permutation, we may assume that in both cases, $E_{1}$
is a birational $2$-section of $\xi:\tilde{\mathbb{P}}'\rightarrow W$
while $E_{2}$ is mapped by $\xi$ onto a section $D$ of $q:W\rightarrow\mathbb{P}^{1}$.
The open subset $U=W\setminus\xi(E_{2}\cup(\tilde{f}')^{-1}(\infty))=W\setminus(D\cup\eta^{-1}(\infty))$
of $W$ is thus isomorphic to $\mathbb{A}^{2}$, and by construction,
the composition $\xi_{0}=\xi\circ\varphi\circ\sigma^{-1}:\mathbb{A}^{3}=\mathbb{P}^{3}\setminus H\rightarrow W$
factors through $U$. Since $E_{1}$ is an irreducible birational
$2$-section of the conic bundle $\xi:\tilde{\mathbb{P}}'\rightarrow W$,
the generic fiber of $\xi_{0}$ is a nontrivial form of the punctured
affine line over the function field of $W$, so $\xi_{0}:\mathbb{A}^{3}\rightarrow U$
is a twisted $\mathbb{A}_{*}^{1}$-fibration. 
\end{proof}
\begin{parn} The twisted $\mathbb{A}_{*}^{1}$-fibrations $\xi_{0}:\mathbb{A}^{3}\rightarrow\mathbb{A}^{2}$
obtained in Theorem \ref{thm:Mcb-twisted-fibrations} above can be
described in terms of the initial data consisting of the smooth cubic
surface $S\subset\mathbb{P}^{3}$ and the hyperplane $H\in|\mathcal{O}_{\mathbb{P}^{3}}(1)|$
as follows. 

a) In the case where $H\cap S$ consists of the union of three lines
$\ell_{1}$, $\ell_{2}$ and $\ell_{3}$, then given a good resolution
$(\tilde{\mathbb{P}},\sigma,\tilde{f})$ of $\overline{f}:\mathbb{P}^{3}\dashrightarrow\mathbb{P}^{1}$,
the fiber of $\tilde{f}:\tilde{\mathbb{P}}\rightarrow\mathbb{P}^{1}$
over the generic point $\eta$ of $\mathbb{P}^{1}$ is a smooth cubic
surface $S_{\eta}\subset\mathbb{P}_{\mathbb{C}(\lambda)}^{3}$ defined
over $\mathbb{C}(\lambda)$ and the horizontal irreducible components
$E_{1}$, $E_{2}$ and $E_{3}$ of $\sigma^{-1}(H)$, corresponding
respectively to $\ell_{1}$, $\ell_{2}$ and $\ell_{3}$ intersect
$S_{\eta}$ along three $(-1)$-curves defined over $\mathbb{C}(\lambda)$.
For a relative MMP $\varphi:\tilde{\mathbb{P}}\dashrightarrow\tilde{\mathbb{P}}'$,
it follows from the description given in the proof of Theorem \ref{thm:MMP-outputs}
that exactly one horizontal irreducible component of $\sigma^{-1}(H)$
is contracted by $\varphi$, say $E_{3}$ up to a permutation. The
intersection of the proper transforms $\varphi_{*}(E_{1})$ and $\varphi_{*}(E_{2})$
of $E_{1}$ and $E_{2}$ with the generic fiber $S_{\eta'}$ of $\tilde{f}':\tilde{\mathbb{P}}'\rightarrow\mathbb{P}^{1}$
are $0$-curves defined over $\mathbb{C}(\lambda)$ intersecting each
other twice at $\mathbb{C}(\lambda)$-rational points. Furthermore,
one of them, say $\varphi_{*}(E_{2})\mid_{S_{\eta}'}$ is a fiber
of the induced Mori conic bundle structure $\xi_{\eta}:S_{\eta}'\rightarrow W_{\eta}\simeq\mathbb{P}_{\mathbb{C}(\lambda)}^{1}$,
the other one $\varphi_{*}(E_{1})\mid_{S_{\eta'}}$ being a $2$-section
of $\xi_{\eta}$. Therefore $\xi_{\eta}$ coincides with the proper
transform by the restriction $\varphi_{\eta}$ of $\varphi$ of the
conic bundle $\theta:S_{\eta}\rightarrow\mathbb{P}_{\mathbb{C}(\lambda)}^{1}$
defined by the mobile part of the restriction to $S{}_{\eta}$ of
the pencil of hyperplanes in $\mathbb{P}_{\mathbb{C}(\lambda)}^{3}$
containing $E_{1}\mid_{S_{\eta}}$. So letting $\Theta_{\ell_{1}}:\mathbb{P}^{3}\dashrightarrow\mathbb{P}^{1}$
be the projection from the line $\ell_{1}\subset H\cap S$, we conclude
that $\xi_{0}:\mathbb{A}^{3}=\mathbb{P}^{3}\setminus H\rightarrow\mathbb{A}^{2}$
coincides with the restriction to $\mathbb{P}^{3}\setminus H$ of
the rational map $\overline{f}\times\Theta_{\ell_{1}}:\mathbb{P}^{3}\dashrightarrow\mathbb{P}^{1}\times\mathbb{P}^{1}$. 

b) In the case where $H\cap S$ consists of the union of a line $\ell$
and a smooth conic, the description given in $\S$ \ref{par:cubic-two-components}
implies by a similar argument that $\xi_{0}:\mathbb{A}^{3}=\mathbb{P}^{3}\setminus H\rightarrow\mathbb{A}^{2}$
coincides with the restriction to $\mathbb{P}^{3}\setminus H$ of
the rational map $\overline{f}\times\Theta_{\ell}:\mathbb{P}^{3}\dashrightarrow\mathbb{P}^{1}\times\mathbb{P}^{1}$
where $\Theta_{\ell}:\mathbb{P}^{3}\dashrightarrow\mathbb{P}^{1}$
denotes the projection from the line $\ell$. 

\end{parn}
\begin{example}
\label{ex:twisted-example} Let $S\subset\mathbb{P}^{3}=\mathrm{Proj}_{\mathbb{C}}(\mathbb{C}[x,y,z,w])$
be the smooth cubic surface defined by the vanishing of the polynomial
$F=w^{2}z+y^{2}x+wx^{2}+z^{3}$, let $\overline{f}:\mathbb{P}^{3}\dashrightarrow\mathbb{P}^{1}$
be the pencil generated by $S$ and $3H$, where $H=\{x=0\}$ and
let 
\[
f:\mathbb{A}^{3}=\mathbb{P}^{3}\setminus H\simeq\mathrm{Spec}(\mathbb{C}[y,z,w])\rightarrow\mathbb{A}^{1},\;(y,z,w)\mapsto w^{2}z+y^{2}+w+z^{3}
\]
be the induced morphism. The intersection $H\cap S$ consists of three
lines $\ell_{1}=\{z=t=0\}$, $\ell_{2}=\{w+iz=t=0\}$ and $\ell_{3}=\{w-iz=t=0\}$
meeting in the Eckardt point $[0:1:0:0]$ of $S$, and the morphism
$\xi_{0}=(f,\mathrm{pr}_{z}):\mathbb{A}^{3}\rightarrow\mathbb{A}^{2}$
is a surjective twisted $\mathbb{A}_{*}^{1}$-fibration induced by
the restriction of $\overline{f}\times\Theta_{\ell_{1}}:\mathbb{P}^{3}\dashrightarrow\mathbb{P}^{1}\times\mathbb{P}^{1}$.
The fact that $\xi_{0}$ is twisted can be seen directly as follows:
its generic fiber is isomorphic to the curve $C\subset\mathbb{A}_{\mathbb{C}(\lambda,z)}^{2}=\mathrm{Spec}(\mathbb{C}(\lambda,z)[y,w])$
defined by the equation $w^{2}z+y^{2}+w+z^{3}-\lambda=0$. Extending
the scalars to the quadratic extension $K=\mathbb{C}(\lambda,z)[v]/(v^{2}-z)$,
we have 
\begin{eqnarray*}
C_{K} & \simeq & \mathrm{Spec}(K[x,y]/(w^{2}v^{2}+y^{2}+w+v^{6}-\lambda)\\
 & \simeq & \mathrm{Spec}(K[x,y]/((wv+\frac{1}{2v})^{2}+y^{2}-(\frac{1}{4v^{2}}-v^{6}+\lambda))\\
 & \simeq & \mathrm{Spec}(K[U,V]/(UV-(\frac{1}{4v^{2}}-v^{6}+\lambda))\\
 & \simeq & \mathrm{Spec}(K[U^{\pm1}])
\end{eqnarray*}
where $U=wv+\frac{1}{2v}+iy$ and $V=wv+\frac{1}{2v}-iy$, on which
the Galois group $\mathrm{Gal}(K/\mathbb{C}(\lambda,z))\simeq\mathbb{Z}_{2}$
acts by $U\mapsto-U^{-1}$. So $C$ is a nontrivial $\mathbb{C}(\lambda,z)$-form
of the punctured affine line over $\mathbb{C}(\lambda,z)$. \end{example}
\begin{rem}
\label{Rq:MCb-Degree-3} In the case where $d=3$ and $H\cap S$ consists
of a line $\ell$ and smooth conic, the fact that the projection $\Theta_{\ell}:\mathbb{P}^{3}\dashrightarrow\mathbb{P}^{1}$
gives rise to a twisted $\mathbb{A}_{*}^{1}$-fibration $\xi_{0}=(\overline{f},\Theta_{\ell})\mid_{\mathbb{P}^{3}\setminus H}:\mathbb{A}^{3}=\mathbb{P}^{3}\setminus H\rightarrow\mathbb{A}^{2}$
does not necessarily imply that a relative MMP $\varphi:\tilde{\mathbb{P}}\dashrightarrow\tilde{\mathbb{P}}'$
ran from a good resolution $(\tilde{\mathbb{P}},\sigma,\tilde{f})$
of $\overline{f}:\mathbb{P}^{3}\dashrightarrow\mathbb{P}^{1}$ terminates
with a Mori conic bundle $\xi:\tilde{\mathbb{P}}'\rightarrow W$ inducing
$\xi_{0}$ (see Proposition \ref{prop:cubic-dP4-termination}). Note
that since the base locus of $\Theta_{\ell}$ is contained in that
of $\overline{f}$, we can choose a good resolution $(\tilde{\mathbb{P}},\sigma,\tilde{f})$
of $\overline{f}$ which simultaneously resolves the indeterminacies
of $\Theta_{\ell}$. Every MMP $\psi:\tilde{\mathbb{P}}\dashrightarrow\tilde{\mathbb{P}}_{1}$
relative to the morphism $(\tilde{f},\Theta_{\ell}\circ\sigma):\tilde{\mathbb{P}}\rightarrow\mathbb{P}^{1}\times\mathbb{P}^{1}$
being also a part of a MMP relative to $\tilde{f}:\tilde{\mathbb{P}}\rightarrow\mathbb{P}^{1}$,
it preserves the open subset $\mathbb{A}^{3}=\tilde{\mathbb{P}}\setminus\sigma^{-1}(H)$
by virtue of Proposition \ref{prop:MMP-preserving}. Such a MMP process
$\psi$ does not contract any horizontal irreducible component of
$\sigma^{-1}(H)$ and terminates with a Mori conic bundle $\xi_{1}:\tilde{\mathbb{P}}_{1}\rightarrow\mathbb{P}^{1}\times\mathbb{P}^{1}$,
whose restriction to $\mathbb{A}^{3}$ coincides with $\xi_{0}$ by
construction. But there is no guarantee in general that $\tilde{f}_{1}=\mathrm{pr}_{1}\circ\xi_{1}:\tilde{\mathbb{P}}_{1}\rightarrow\mathbb{P}^{1}$
coincides with the final output of a MMP relative to $\tilde{f}:\tilde{\mathbb{P}}\rightarrow\mathbb{P}^{1}$:
there could exist a relative MMP $\varphi:\tilde{\mathbb{P}}\dashrightarrow\tilde{\mathbb{P}}'$
which factorizes through $\psi$ and for which the induced rational
map $\psi'=\varphi\circ\psi^{-1}:\tilde{\mathbb{P}}_{1}\dashrightarrow\tilde{\mathbb{P}}'$
contracts an irreducible component of $\psi_{*}(\sigma^{-1}(H))$
that is horizontal for $\tilde{f}_{1}$. 
\end{rem}
\bibliographystyle{amsplain}

\end{document}